\newcommand{\RR}{\mathbb{R}}
\newcommand{\CC}{\mathbb{C}}
\newcommand{\front}[1]{\href{http://front.math.ucdavis.edu/#1}{arXiv:#1}}
\newtheorem{prop}{Proposition}
\newtheorem{thm}[prop]{Theorem}
\newtheorem{lemma}[prop]{Lemma}
\newtheorem{theorem}[prop]{Theorem}
\newtheorem{proposition}[prop]{Proposition}
\newtheorem{corollary}[prop]{Corollary}
\theoremstyle{remark}
\newtheorem{remark}{Remark}
\title[Three notions of tropical rank for symmetric matrices]{Three notions of
tropical rank \\ for symmetric matrices}
\author{Dustin Cartwright \and Melody Chan}
\address{Dept.\ of Mathematics, University of California, Berkeley, CA 94720,
USA}
\email{dustin@math.berkeley.edu, mtchan@math.berkeley.edu}
\begin{document}
\begin{abstract}
We introduce and study three different notions of tropical rank for symmetric
and dissimilarity matrices in terms of minimal decompositions into rank~$1$
symmetric matrices, star tree matrices, and tree matrices. Our results provide a close
study of the tropical secant sets of certain nice tropical varieties, including
the tropical Grassmannian. In particular, we determine the dimension of each
secant set, the convex hull of the variety, and in most cases, the smallest
secant set which is equal to the convex hull.
\end{abstract}
\maketitle

\section{Introduction}
In this paper, we study tropical secant sets and rank for symmetric matrices.
Our setting is
the \emph{tropical semiring} $(\RR\cup\{ \infty\}, \oplus , \odot )$, where
tropical addition is given by $ x  \oplus y =\min(x,y)$  and tropical
multiplication is given by $x \odot  y = x+y$.
The $k$th  \emph{tropical secant set} of a subset $V $  of $\RR ^ N$  is defined to be the set of points
\begin{equation*}
\{x\in\RR ^ N: x=v_1 \oplus  \cdots  \oplus  v_k,\;v_i\in V\},
\end{equation*}
where $ \oplus $  denotes coordinate-wise minimum. This set is typically not a
tropical variety and thus we prefer the term ``secant set'' to ``secant
variety,'' which has appeared previously in the literature.  The \emph{rank} of
a point $ x\in\RR ^ N$  with respect to~$ V $  is the smallest
integer $ k$ such that $ x$ lies in the $ k $th tropical secant set of $ V $, or
$\infty $ if there is no
such $k$.

In~\cite{dss}, Develin, Santos, and Sturmfels define the Barvinok rank of a
matrix, not necessarily symmetric, to be the
rank with respect to the subset of $n \times n$ rank~$1$ matrices, and their
definition serves as a model for ours.  In addition, they define two other
notions of rank, Kapranov rank and tropical rank, for which there are no
analogues in this paper. Further examination of ranks of not necessarily
symmetric matrices can be found in the review article~\cite{akian}.

We give a careful examination  of secant sets and rank with respect to three
families of tropical varieties in the space of symmetric matrices and the space
of dissimilarity matrices.  By a
\emph{$n\times n$ dissimilarity matrix} we simply mean a function from
${[n]\choose 2}$ to $\RR$, which we will write as a symmetric matrix without any
entries on the diagonal. 
There is a natural projection from $n\times n$ symmetric matrices to $n
\times n$ dissimilarity matrices which we denote
by~$\pi$.  For example,
\begin{equation} \label{eqn:intro-exs}
M=
\begin{bmatrix}
0 & 1 & 0 & 0\\
1 & 0 & 0 & 0\\
0 & 0 & 0 & 1\\
0 & 0 & 1 & 0
\end{bmatrix} \quad\mbox{and}\quad\pi (M) =
\begin{bmatrix}
* & 1 & 0 & 0\\
1 & * & 0 & 0\\
0 & 0 & * & 1\\
0 & 0 & 1 & *
\end{bmatrix}
\end{equation}
are a symmetric matrix and dissimilarity matrix respectively.

Our first family is the \emph{tropical Veronese} of degree $2$, which is the
tropicalization of the classical space of symmetric matrices of rank~1. It is a
classical linear subspace of the space of symmetric matrices consisting of those
matrices which can be written as $v ^ T\odot v$ for some row vector $v$. The
rank of a matrix with respect to the tropical Veronese is called \emph{symmetric
Barvinok rank}, because it is the symmetric analogue of Barvinok
rank.

Second, we consider the \emph{space of star trees}, which is the image of the
tropical Veronese under the projection $\pi$.  Equivalently, it can be obtained
by first projecting the classical Veronese onto its off-diagonal
entries and then tropicalizing. The classical variety and its secant varieties
were studied in~\cite{drton}. The tropical variety is a classical linear
subspace of the space of dissimilarity matrices, and we call the corresponding
notion of rank \emph{star tree rank}. The name reflects the fact that the
matrices
with star tree rank~$1$ are precisely those points of the tropical Grassmannian
which correspond to trees with no internal edges, i.e.\ star trees, in in the
identification below.

Third, we consider the \emph{tropical Grassmannian} $G_{2,n}$, which is the
tropicalization of the Grassmannian of 2-dimensional subspaces in an
$n$-dimensional vector space, and was first studied in~\cite{ss}.  It consists
of exactly those dissimilarity
matrices arising as the distance matrix of a weighted tree with $n$ leaves
in which internal edges have negative weights. Therefore, we call the points in
the tropical Grassmannian \emph{tree matrices}, and call rank with respect to
the tropical Grassmannian the \emph{tree rank}. Note that our definition of tree
rank differs from that in~\cite[Ch.~3]{ps}, which uses a different notion of
mixtures.

Our first two families are examples of classical linear spaces, whose secant
sets were studied by Mike Develin~\cite{de}.  He defines a natural polyhedral
fan such that
the tropical secant set is the support of this polyhedral fan.  Moreover,
Theorem~2.1 in~\cite{de} gives an algorithm for computing the rank of a point
with respect to a fixed linear space.  In contrast, we do not know of a good
algorithm for computing the rank with
respect to the tropical Grassmannian (see Section~\ref{sec:open-questions}), and
we do not know of a natural fan structure.

\begin{figure}
\begin{center}
\input{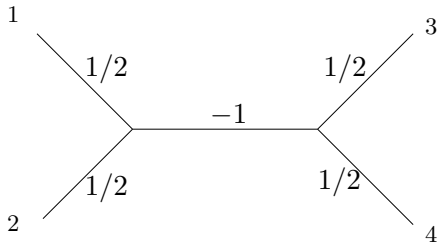}
\label{tree_rank}
\end{center}
\caption{Weighted tree whose distance matrix is $\pi(M)$
from~(\ref{eqn:intro-exs}).} \label{fig:example-tree}
\end{figure}
We use our examples of $M$ and $\pi(M)$ from~(\ref{eqn:intro-exs}) to illustrate
our three notions of rank.
Proposition~\ref{prop:zero-inf-rank} tells us that the symmetric Barvinok rank
of~$M$ is~4.
Theorem~\ref{thm:max-star-tree-rank} tells us that the star tree
rank of $ \pi(M)$ is~2. Explicitly, we have
\begin{equation*}
\pi (M)=\begin{bmatrix}
* & 1 & 0 & 0\\
1 & * & 2 & 2\\
0 & 2 & * & 1\\
0 & 2 & 1 & *
\end{bmatrix}\oplus\begin{bmatrix}
* & 1 & 2 & 2\\
1 & * & 0 & 0\\
2 & 0 & * & 1\\
2 & 0 & 1 & *
\end{bmatrix}.
\end{equation*}
Finally, the tree rank of $ \pi (M)$  is~1 by
Proposition~\ref{prop:tree-rank-01}, which can also be seen explicitly from the
weighted tree in Figure~\ref{fig:example-tree}.
This example shows that all three of our notions of rank can be different.

However, for any $n\times n$ symmetric matrix~$M$, we have
\begin{equation} \label{eqn:rank-inequals}
\operatorname{symmetric\ Barvinok\ rank}(M) \geq
\operatorname{star\ tree\ rank}(\pi (M)) \geq
\operatorname{tree\ rank}(\pi (M)).
\end{equation}
The first inequality follows from the fact that the set of dissimilarity
matrices of star tree rank~$1$ is the projection of the set of matrices of
symmetric Barvinok rank~$1$.
The second inequality follows from the fact that the space of star trees
is contained in the tropical Grassmannian.

The rest of the paper is organized as follows. In Section~\ref{sec:def-graph},
we present a technique for proving lower bounds on rank.  We introduce a
graph associated to a matrix for each of our notions of rank;
the chromatic number of this graph is a lower bound on the rank of the matrix.
The same technique applies to provide a lower bound to the rank of any
point with respect to any tropical prevariety, although in general it may
produce a hypergraph instead of a graph.

We examine symmetric Barvinok rank, star tree rank, and tree rank in Sections
\ref{sec:sym-barv-rank}, \ref{sec:star-tree-rank}, and~\ref{sec:tree-rank}
respectively.  We prove upper bounds on the rank in each case,
and with the exception of tree
rank, our upper bounds are sharp. We show that the
symmetric Barvinok rank of an $n\times n$ symmetric matrix can be infinite, but
even when the rank is finite it can exceed $n$, and in fact can grow
quadratically in~$n$ (Theorem~\ref{thm:max-rank}). For each notion of rank, the
set of matrices with
rank at most~$k$ is a union of polyhedral cones, and we compute the dimension of
these sets, defined as the dimension of the largest cone.
In each case, the
dimension of the tropical secant set equals the dimension of the clasical secant
variety, confirming Draisma's observation that tropical geometry provides 
useful lower bounds for the dimensions of classical secant varieties~\cite{d}.
Finally, we give a combinatorial characterization of each notion of rank for a $0/1$
matrix in terms of graph covers. 

In Section~\ref{sec:sym-barv-3}, we examine $3 \times 3$ symmetric matrices and
explicitly characterize the stratification by symmetric Barvinok rank.
In Sections \ref{sec:star-tree-5} and~\ref{sec:tree-5}, we do the same for the
$5 \times 5$ dissimilarity matrices and the stratifications by star tree rank
and tree rank respectively.
In particular, we show that the lower bounds from the chromatic number in
Section~\ref{sec:def-graph} are exact in these cases.
We close with some open problems in Section~\ref{sec:open-questions}.

\section{Lower bounds on rank via hypergraph coloring}\label{sec:def-graph}

Before we examine our three notions of rank, we give a general combinatorial
construction: a hypergraph whose chromatic number  yields a lower bound on rank. 

Recall that a  \emph{hypergraph} consists of a ground set, called vertices, and
a set of subsets of the ground set, called hyperedges. The \emph{chromatic
number} of a hypergraph~$H$, denoted $\chi(H) $, is the smallest number $ r$
such that
the vertices of $ H$  can be partitioned into $ r $ color classes with no
hyperedge of $ H$ monochromatic. In particular, if $H$
contains a hyperedge of size~$1$, then $\chi(H)$ is~$\infty$.

Now, suppose we have a tropical prevariety $
V\subseteq\RR ^ N$. Recall that a tropical polynomial
\begin{equation} \label{eqn:tropical-polynomial}
p(x_1, \ldots, x_N) = \bigoplus_{i=1}^t a_i \odot x_1^{c_{i1}} \odot\cdots\odot
x_N^{c_{iN}}
\end{equation}
defines a tropical hypersurface consisting of those vectors $x \in \RR^N$ such
that the minimum in evaluating $p(x)$ is achieved at least twice. A
\emph{tropical prevariety} is the intersection of finitely many tropical
hypersurfaces, and any
finite set~$S$ of tropical polynomials defining the prevariety $V$ is called a
\emph{tropical basis}.

Now, given a point $w \in \RR^N$ and a tropical basis $S$ for~$V$, we construct
a hypergraph on ground
set $ [N]$ as follows. Let $p$ from~(\ref{eqn:tropical-polynomial}) be a
tropical polynomial in~$S$, with all exponents $c_{ij} \geq 0$.
If the minimum is achieved uniquely when $p$ is evaluated at $w$, then we add a
hyperedge $E$ whose elements correspond to the coordinates that appear with
non-zero exponent in the unique minimal term.  The \emph{deficiency
hypergraph} of~$w$ with respect to~$V$ and~$S$ consists of hyperedges coming
from all polynomials in $S$ with a unique minimum at~$w$. In particular, the
deficiency hypergraph has no hyperedges (and thus has chromatic number $1$) if
and only if $w$ is in~$V$.

\begin{prop} \label{prop:deficiency-hypergraph}
If $H$ is the deficiency hypergraph constructed above, then
the rank of $w\in\RR ^ N$ with respect to $V\subseteq\RR ^ N$  is at least
$\chi(H)$. 
\end{prop}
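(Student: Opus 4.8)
The plan is to show that if $w$ has rank $k$ with respect to $V$, then the vertex set $[N]$ can be $k$-colored so that no hyperedge of $H$ is monochromatic, which is exactly the statement $\chi(H) \le k$. So suppose $w = v_1 \oplus \cdots \oplus v_k$ with each $v_j \in V$. For each coordinate $i \in [N]$, the minimum $w_i = \min_j (v_j)_i$ is attained by at least one index $j$; I would define a coloring $\phi\colon [N] \to [k]$ by choosing, for each $i$, some $j$ with $(v_j)_i = w_i$ and setting $\phi(i) = j$ (ties broken arbitrarily). I then claim this coloring witnesses $\chi(H) \le k$.

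To verify this, I would argue by contradiction: suppose some hyperedge $E$ of $H$ is monochromatic, say all its vertices receive color $j$. By construction of the deficiency hypergraph, $E$ arises from a tropical polynomial $p \in S$ of the form \eqref{eqn:tropical-polynomial} with nonnegative exponents, such that the minimum in evaluating $p(w)$ is achieved \emph{uniquely} at some term, say the $i_0$th, and $E$ consists exactly of the coordinates appearing with nonzero exponent in that term. The key point is that since $E$ is monochromatic with color $j$, every coordinate $x_\ell$ appearing in the $i_0$th term satisfies $(v_j)_\ell = w_\ell$, and moreover the $i_0$th term is a tropical monomial (a nonnegative-integer-weighted sum of those coordinates plus the constant $a_{i_0}$), so monotonicity of addition gives that the $i_0$th term evaluated at $v_j$ equals the $i_0$th term evaluated at $w$. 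Hence $p(v_j) \le (\text{$i_0$th term at } v_j) = (\text{$i_0$th term at } w) = p(w)$.

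On the other hand, for the coordinates $\ell$ appearing in any \emph{other} term of $p$, we always have $(v_j)_\ell \ge w_\ell$ coordinatewise, and again by monotonicity each other term evaluated at $v_j$ is $\ge$ the same term evaluated at $w$, which (by uniqueness of the minimum at $w$) is strictly greater than the $i_0$th term at $w = p(w)$. Therefore, when evaluating $p$ at $v_j$, the $i_0$th term is strictly smaller than every other term, so the minimum is achieved uniquely — meaning $p(v_j) \ne 0$ in the tropical-hypersurface sense, i.e.\ $v_j \notin V(p) \supseteq V$. This contradicts $v_j \in V$. Hence no hyperedge is monochromatic and $\chi(H) \le k$.

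The main obstacle, and the step requiring the most care, is the monotonicity bookkeeping in the case analysis on the terms of $p$: one must use that all exponents $c_{ij}$ are nonnegative (so that $(v_j)_\ell \ge w_\ell$ for $\ell \notin E$ translates to the corresponding term increasing) together with the equality on $E$, and correctly combine these to conclude the minimum at $v_j$ is attained \emph{uniquely}. A minor subtlety is the degenerate case where $w$ has rank $\infty$, in which case the inequality is vacuous; and the case $\chi(H) = \infty$, which by the remark preceding the proposition happens exactly when $H$ has a size-$1$ hyperedge, forcing the same contradiction with $k = $ any finite value, so rank must be $\infty$ there too — consistent with the inequality.
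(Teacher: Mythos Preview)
Your proof is correct and follows essentially the same approach as the paper's: both define the coloring by assigning to each coordinate an index $j$ where $v_j$ agrees with $w$, and both derive a contradiction from a monochromatic hyperedge by combining coordinatewise monotonicity with the uniqueness of the minimizing term at $w$. The only difference is organizational: the paper phrases the contradiction as ``the minimum of $p(v_j)$ is achieved at least twice, hence is strictly greater than $p(w)$, so $v_j$ cannot agree with $w$ on all of $E$,'' whereas you phrase it as ``if $v_j$ agrees with $w$ on all of $E$, then the minimum of $p(v_j)$ is achieved uniquely.''
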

\begin{proof}
Suppose that $w$ has rank $r$ with respect to~$V$, and let $w = v_1 \oplus
\cdots \oplus v_r$ be an
expression of $w$ as the tropical sum of $r$ points $v_i \in V$. We will
construct an $r$-coloring of the deficiency hypegraph~$H$, which will show that
$\chi(H) \leq r$. For each $i \in [N]$, there is at least one $v_j$ which agrees
with $w$ in the $i$th coordinate, so arbitrarily pick one such $j$ as the color
for vertex~$i$ in~$H$. Let $E$ be a hyperedge of~$H$ and $p$ the
associated tropical polynomial. We claim that $E$ cannot be monochromatic with
color~$j$. Each coordinate of $v_j$ is greater than or equal to the
corresponding coordinate of~$w$, so each term of $p(v_j)$ is greater than or
equal to the corresponding term of $p(w)$. On the other hand, the minimum in the
evaluation
of $p(v_j)$ is achieved at least twice, so the minimum must be strictly greater
than $p(w)$. Thus, $v_j$ cannot agree with $w$ for all coordinates in $E$, so
$E$ is not monochromatic of color~$j$. This holds for any color~$j$, so we have
constructed an $r$-coloring, and thus, $\chi(H) \leq r$.
\end{proof}

\begin{corollary}\label{cor:singleton}
If the deficiency hypergraph $H$ has a hyperedge of
size~$1$, then the rank of $w$ with respect to $V$  is infinite.
\end{corollary}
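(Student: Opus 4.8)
The plan is to read this off directly from Proposition~\ref{prop:deficiency-hypergraph} together with the elementary fact, already noted in the discussion preceding that proposition, that a hypergraph containing a hyperedge of size~$1$ has chromatic number~$\infty$. First I would spell out that fact: if $E=\{i\}$ is a hyperedge of $H$, then in any partition of the vertex set $[N]$ into color classes the single vertex~$i$ lands in exactly one class, so the hyperedge $E$ is monochromatic. Hence no partition into finitely many color classes avoids a monochromatic hyperedge, which is precisely the statement $\chi(H)=\infty$ in the convention adopted here.

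Next I would invoke Proposition~\ref{prop:deficiency-hypergraph}, which asserts $\operatorname{rank}_V(w)\ge\chi(H)$. Since the right-hand side is $\infty$ and the rank takes values in $\mathbb{Z}_{\ge 1}\cup\{\infty\}$, this forces $\operatorname{rank}_V(w)=\infty$; that is, $w$ lies in no $k$th tropical secant set of $V$ for any finite~$k$. To be careful about the fact that the proof of Proposition~\ref{prop:deficiency-hypergraph} begins by positing a \emph{finite} rank~$r$, I would phrase the last step as a contrapositive: were $\operatorname{rank}_V(w)=r<\infty$, that proof produces a proper $r$-coloring of~$H$, contradicting $\chi(H)=\infty$. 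Either reading yields the claim.

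The (very minor) point to watch, and the only thing that needs checking, is this compatibility between the statement of Proposition~\ref{prop:deficiency-hypergraph} as an inequality in $\mathbb{Z}_{\ge1}\cup\{\infty\}$ and the finiteness assumption built into its proof; once that is handled via the contrapositive there is no real obstacle, and the corollary is essentially a restatement of the proposition in the degenerate case where $H$ has a singleton hyperedge.
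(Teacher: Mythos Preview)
Your proof is correct and follows exactly the paper's own argument: observe that a singleton hyperedge forces $\chi(H)=\infty$, then apply Proposition~\ref{prop:deficiency-hypergraph}. The paper's proof is just a one-line version of what you wrote, and your extra care with the contrapositive is a welcome clarification rather than a deviation.
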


\begin{proof}
If $H$ has a hyperedge of size~$1$, then $\chi(H)$ is $\infty$, and thus the
rank of $w$ is infinite.
\end{proof}

We do not know of any examples in which this lower bound is
actually strict; see Section~\ref{sec:open-questions}.

For the varieties considered in this paper, we will take quadratic tropical
bases and thus the deficiency hypergraph will always be a graph (possibly with
loops). Accordingly, we will call it the \emph{deficiency graph}.

\section{Symmetric Barvinok rank}\label{sec:sym-barv-rank}

Recall from the introduction that the symmetric Barvinok rank of a symmetric
matrix $M$ is the
smallest number~$r$ such that $M$ can be written as the sum of $r$ rank~$1$
symmetric matrices. The $2\times 2$ minors $x_{ij} x_{kl} \oplus x_{il}x_{kj}$
of $M$ for $i \neq k$ and $l \neq j$ form a tropical basis for the variety of
rank~$1$ symmetric matrices. We will always construct our deficiency graph
with respect to this tropical basis.

Our first observation is that the symmetric Barvinok rank of a matrix can be
infinite. More precisely,
\begin{prop} \label{prop:inf-rank}
If $M$ is a symmetric matrix and $2M_{ij} < M_{ii} + M_{jj}$ for some $i$
and~$j$, then the symmetric Barvinok
rank of $M$ is infinite.
\end{prop}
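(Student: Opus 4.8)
The plan is to locate a size‑one hyperedge in the deficiency graph of $M$ and then quote Corollary~\ref{cor:singleton}. Note first that the hypothesis forces $i\neq j$. Among the $2\times 2$ minors making up our tropical basis is the principal minor on rows and columns $\{i,j\}$; as a tropical polynomial it is $x_{ii}\odot x_{jj}\oplus x_{ij}\odot x_{ji}$, which in the space of symmetric matrices reads $x_{ii}\odot x_{jj}\oplus x_{ij}^2$. Evaluating at $M$ gives $\min(M_{ii}+M_{jj},\,2M_{ij})$, and the hypothesis $2M_{ij}<M_{ii}+M_{jj}$ says precisely that this minimum is attained uniquely, at the monomial $x_{ij}^2$. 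The only coordinate appearing with nonzero exponent in that monomial is $x_{ij}$, so the construction of Section~\ref{sec:def-graph} contributes to the deficiency graph of $M$ the hyperedge $\{x_{ij}\}$ of size~$1$. Since a hypergraph with a singleton hyperedge has chromatic number $\infty$, Corollary~\ref{cor:singleton} gives that the symmetric Barvinok rank of $M$ is infinite.

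For completeness I would also record the direct argument, which makes transparent why this particular inequality is the obstruction. Suppose $M$ had finite symmetric Barvinok rank, so $M=N_1\oplus\cdots\oplus N_r$ with each $N_m$ of rank~$1$, say $(N_m)_{kl}=v^{(m)}_k+v^{(m)}_l$ for some $v^{(m)}\in\RR^n$. Choose an index $a$ realizing the coordinatewise minimum in the $(i,j)$ entry, so $M_{ij}=v^{(a)}_i+v^{(a)}_j$. Then $M_{ii}\le 2v^{(a)}_i$ and $M_{jj}\le 2v^{(a)}_j$, and adding these inequalities yields $M_{ii}+M_{jj}\le 2(v^{(a)}_i+v^{(a)}_j)=2M_{ij}$, contradicting the assumption. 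Hence no finite decomposition exists.

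There is no genuine obstacle here; the statement follows in a couple of lines by either route, and I would likely present only the deficiency‑graph version since it also serves as the first illustration of the machinery of Section~\ref{sec:def-graph}. The one point that deserves a moment's attention is a bookkeeping convention: in the space of symmetric matrices $x_{ij}$ and $x_{ji}$ denote the same coordinate, so $x_{ij}\odot x_{ji}$ is a one‑variable monomial and the feature it produces in the deficiency graph is a size‑one hyperedge (equivalently, a loop at the vertex $x_{ij}$), which is exactly what triggers Corollary~\ref{cor:singleton}. With that convention understood, the proof is complete.
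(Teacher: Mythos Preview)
Your proof is correct and follows essentially the same route as the paper: identify the principal $2\times 2$ minor $x_{ij}^2\oplus x_{ii}x_{jj}$ in the tropical basis, observe that the hypothesis forces a loop (size-one hyperedge) at the node $ij$ in the deficiency graph, and invoke Corollary~\ref{cor:singleton}. The additional direct argument and the remark on the $x_{ij}=x_{ji}$ convention are fine but not needed.
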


\begin{proof}
The tropical polynomial $x_{ij}^2 \oplus x_{ii} x_{jj}$ is in the tropical
basis, so if $2M_{ij} <M_{ii} + M_{jj}$ for some $ i$  and $ j$, then the
deficiency graph for~$M$ has a loop at the node $ij$.
 Therefore, $M$
has infinite rank by Corollary~\ref{cor:singleton}.
\end{proof}

In fact, the converse to Proposition~\ref{prop:inf-rank} is also true; see
Theorem~\ref{thm:max-rank}.
In order to construct decompositions into rank~$1$ matrices, we need the
following lemma.
\begin{lemma} \label{lem:rank-1-extn}
Let $M$ be an $m \times m$ symmetric Barvinok rank~$1$ matrix, $n > m$ an
integer and $C$
any real number. Then there exists an $n \times n$ symmetric rank~$1$ matrix~$N$
such that the upper left $m \times m$ submatrix is $M$ and every other entry is
at least~$C$.
\end{lemma}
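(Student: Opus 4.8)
The plan is to simply exhibit the matrix~$N$ explicitly by extending the underlying vector. By definition of the tropical Veronese recalled in the introduction, a symmetric matrix has symmetric Barvinok rank~$1$ precisely when it is of the form $v^T \odot v$ for some row vector~$v$; so write $M = v^T \odot v$ with $v \in \RR^m$, i.e.\ $M_{ij} = v_i + v_j$ for all $1 \le i, j \le m$.

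Now choose a real number
\begin{equation*}
t \geq \max\Bigl( \tfrac{C}{2},\ C - \min_{1 \le i \le m} v_i \Bigr),
\end{equation*}
and define $w \in \RR^n$ by $w_i = v_i$ for $1 \le i \le m$ and $w_i = t$ for $m < i \le n$. Set $N = w^T \odot w$, so that $N_{ij} = w_i + w_j$. Then $N$ is an $n \times n$ symmetric matrix of Barvinok rank at most~$1$ (exactly~$1$, since $n \geq 1$), and its upper left $m \times m$ submatrix is $w_i + w_j = v_i + v_j = M_{ij}$, as required.

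It remains to check the size of the remaining entries $N_{ij}$, those with $j > m$. If $i \le m < j$, then $N_{ij} = v_i + t \geq \min_k v_k + t \geq C$ by the choice of~$t$; if $m < i, j$ (including the diagonal case $i = j$), then $N_{ij} = 2t \geq C$, again by the choice of~$t$. Hence every entry of~$N$ outside the upper left $m \times m$ block is at least~$C$, completing the proof.

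There is no real obstacle here: the only point that requires care is recognizing that symmetric Barvinok rank~$1$ is equivalent to being of the form $v^T \odot v$, after which the constant~$t$ is forced by the two types of new entries. The lemma is stated separately only because this extension construction will be invoked repeatedly when building rank~$r$ decompositions in the sequel.
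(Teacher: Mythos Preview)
Your proof is correct and is essentially identical to the paper's own argument: the paper also writes $M = v^T \odot v$, sets $C' = \max\{\tfrac{1}{2}C,\, C - v_i\}$ (implicitly ranging over~$i$), extends $v$ by repeating $C'$, and takes $N = w^T \odot w$. If anything, your version is slightly more careful, since you spell out the verification of the two kinds of new entries that the paper leaves to the reader.
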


\begin{proof}
Since $M$ has rank~$1$, then $M = v^T \odot v$ for some row vector $v$. Let $C'
= \max\{\frac{1}{2}C, C- v_i\}$, and let $w$ be the vector consisting of $v$ followed by $C'$
repeated $n-m$ times. Then $N = w^T \odot w$ has the desired properties.
\end{proof}

\begin{remark} \label{rmk:rank-1-extn}
We will use the symbol $\infty$ in an entire row and column of a matrix to
denote sufficiently large values that maintain the property of being rank~$1$.
So, if $M$ is an $m \times m$ rank~$1$ matrix, then
\begin{equation*}
\begin{bmatrix}
M & \infty \\ \infty & \infty \end{bmatrix}
\end{equation*}
denotes the $(m+1)\times(m+1)$ matrix obtained by applying 
Lemma~\ref{lem:rank-1-extn} with $n = m+1$. The value of~$C$ will be clear from
the context.
\end{remark}

Next, we give a graph-theoretic characterization of the symmetric Barvinok rank
of $0/1$-matrices. We define a \emph{clique cover} of a simple graph~$G$ to be a
collection of $r$ complete subgraphs such that every edge and every vertex of $
G$  is in some element of the collection.  Given an $n \times n$ symmetric $0/1$
matrix  $M$ with zeroes on the diagonal, define $G_M$ to be the graph whose
vertices are the integers $[n]$, and which has an edge between $i$ and~$j$ if
and only if $M_{ij} = 0$.

\begin{prop} \label{prop:zero-inf-rank}
Suppose $M$ is a symmetric $0/1$ matrix with zeroes on the diagonal. Then the
symmetric Barvinok rank of $M$ is the size of a smallest clique cover
of~$G_{M}$.

On the other hand, suppose that $M$ is a symmetric $0/1$ matrix with at least
one entry of $1$ on the diagonal. If there exist $i$ and $j$ such that $M_{ii}
= 1$ and $M_{ij} = 0$, then the symmetric Barvinok rank of~$M$ is infinite.
Otherwise, let $M'$ be the maximal principal submatrix with zeroes on the
diagonal. The symmetric Barvinok rank of $M$ is one greater than the symmetric
Barvionk rank of $M'$.
\end{prop}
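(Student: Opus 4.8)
The plan is to handle the three regimes of the statement separately, but to route all of them through one device. Given any expression $M=\bigoplus_{s=1}^{r}(v^{(s)})^{T}\odot v^{(s)}$ of $M$ as a tropical sum of $r$ rank~$1$ symmetric matrices, record for each $s$ the ``support set'' $K_{s}=\{\,i:v^{(s)}_{i}=0\,\}$. The key starting observation is that $M_{ii}=\min_{s}2v^{(s)}_{i}$, so when $M$ has $0/1$ entries: if $M_{ii}=0$ then $v^{(s)}_{i}\ge 0$ for every $s$ and $v^{(s)}_{i}=0$ for at least one $s$; and if $M_{ii}=1$ then $v^{(s)}_{i}\ge\tfrac12$ for every $s$, with equality for at least one $s$. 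These inequalities are exactly what let one read a combinatorial cover off of an algebraic decomposition, and conversely.

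First I would dispatch the zero-diagonal case. \emph{Lower bound:} starting from a decomposition $M=\bigoplus_{s=1}^{r}(v^{(s)})^{T}\odot v^{(s)}$, form $K_{1},\dots,K_{r}$ as above. Each $K_{s}$ is a clique of $G_{M}$, because $i\ne j$ in $K_{s}$ gives $M_{ij}\le v^{(s)}_{i}+v^{(s)}_{j}=0$, hence $M_{ij}=0$; every vertex lies in some $K_{s}$ by the diagonal observation; and every edge $ij$ lies in some $K_{s}$, since $\min_{s}(v^{(s)}_{i}+v^{(s)}_{j})=0$ with all summands $\ge 0$ forces $v^{(s)}_{i}=v^{(s)}_{j}=0$ for some~$s$. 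So $\{K_{s}\}$ is a clique cover of $G_{M}$, and the symmetric Barvinok rank is at least the clique cover number. \emph{Upper bound:} given a clique cover $K_{1},\dots,K_{r}$ of $G_{M}$, put $v^{(s)}_{i}=0$ if $i\in K_{s}$ and $v^{(s)}_{i}=1$ otherwise; a direct check on the three types of entry (diagonal, edge, non-edge) shows $\bigoplus_{s}(v^{(s)})^{T}\odot v^{(s)}=M$. The one thing to get right is that the value on vertices outside $K_{s}$ must be $1$, not $\tfrac12$: with the value $1$ every summand is entrywise $\ge M$, while for any entry $ij$ a clique through~$i$ (which exists by the covering property) makes the minimum over~$s$ hit $M_{ij}$ exactly, both for edges and for non-edges.

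Next, the cases with a~$1$ on the diagonal. Subcase~(2a) is immediate from Proposition~\ref{prop:inf-rank}: if $M_{ii}=1$ and $M_{ij}=0$ then $2M_{ij}=0<1\le M_{ii}+M_{jj}$, so $M$ has infinite rank. For Subcase~(2b), set $S=\{i:M_{ii}=0\}$ and $T=\{i:M_{ii}=1\}\neq\varnothing$; excluding~(2a) forces $M_{ij}=1$ whenever $i\in T$ or $j\in T$ with $i\ne j$, and makes $M':=M|_{S}$ exactly the maximal principal submatrix with zero diagonal. \emph{Upper bound:} take a rank-realizing decomposition $M'=\bigoplus_{s=1}^{r'}(u^{(s)})^{T}\odot u^{(s)}$, extend each $u^{(s)}$ to the index set $[n]$ by huge values on $T$ using Lemma~\ref{lem:rank-1-extn} so that every entry outside the $S\times S$ block is at least~$2$, and add one further rank~$1$ matrix, the all-ones matrix $w^{T}\odot w$ with $w=(\tfrac12,\dots,\tfrac12)$; checking the $S\times S$, $S\times T$, and $T\times T$ blocks shows this tropical sum of $r'+1$ matrices equals~$M$. \emph{Lower bound:} from any decomposition $M=\bigoplus_{s=1}^{r}(v^{(s)})^{T}\odot v^{(s)}$ form $K_{s}=\{i\in S:v^{(s)}_{i}=0\}$; exactly as in the zero-diagonal case these form a clique cover of $G_{M'}$. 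Now fix $i_{0}\in T$ and an index $s_{0}$ with $v^{(s_{0})}_{i_{0}}=\tfrac12$. For every $i\in S$ we have $M_{i i_{0}}=1$, hence $v^{(s_{0})}_{i}+v^{(s_{0})}_{i_{0}}\ge 1$ and so $v^{(s_{0})}_{i}\ge\tfrac12>0$; therefore $K_{s_{0}}=\varnothing$. Deleting this empty member leaves a clique cover of $G_{M'}$ with at most $r-1$ members, so by the already-proved zero-diagonal case $r-1\ge r'$. Combining with the upper bound (which in particular shows the rank is finite) gives that the symmetric Barvinok rank of $M$ equals $r'+1$.

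The step I expect to be the real obstacle is the lower bound in Subcase~(2b): one must show that in any decomposition of $M$ some rank~$1$ summand is genuinely ``wasted'' on the $S\times S$ block. The naive attempt---delete a summand---fails, because a summand may be the unique term attaining the minimum for some non-edge of $G_{M'}$, and deleting it then changes the $S\times S$ block. The fix is to argue with the purely combinatorial data $K_{s}$ rather than with the numerical values, together with the observation that the diagonal $1$ at $i_{0}\in T$ pins the corresponding vector $v^{(s_{0})}$ to be $\ge\tfrac12$ on all of $S$, so the colour $s_{0}$ contributes nothing to the clique cover. Smaller points still needing care are the choice of constant (the value $1$, not $\tfrac12$, in the zero-diagonal upper bound) and the degenerate case $S=\varnothing$, where $M$ is the all-ones matrix, $r'=0$, and the formula reads $1=0+1$.
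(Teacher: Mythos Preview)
Your proof is correct and follows essentially the same approach as the paper: the zero-diagonal case is handled identically (cliques $\leftrightarrow$ support sets of the rank~$1$ vectors), and in Subcase~(2b) both arguments extend a decomposition of $M'$ together with the all-ones matrix for the upper bound, and for the lower bound observe that a summand achieving the value~$1$ at a diagonal entry in $T$ can contribute no zeros on $S$. Your write-up of that last step is more explicit than the paper's (which simply says it is ``straightforward to check''), but the underlying idea is the same.
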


\begin{proof}
First suppose that all diagonal entries of $ M $ are $0$.
Let $G_1, \ldots, G_r$ be a clique cover of $G_M$.
Let $v_i$ be the  $ 0/1$ row vector whose $j$th entry is $0$ if $j$ is a vertex
of~$G_i$, and let $ M_i =v_i^T \odot v_i $.  Then we claim that $M = \bigoplus
M_i$.
 Each~$0$ entry in $M$
corresponds to an edge or vertex of $G_M$ which is in some $G_i$, so there
is a $0$ in $M_i$.  For $j \neq k$ such that $M_{jk} = 1$, some $G_i$
contains the vertex~$j$, and hence misses the vertex $k$, so $(M_i)_{jk} = 1$. On the other hand, the entry is at least $1$ in
the other rank~$1$ matrices because none of the corresponding graphs contains $ jk$  as an edge.  Thus the
rank of $M$ is at most~$r$.

Conversely, suppose $N$ is a rank~$1$ symmetric matrix in a decomposition of
$M$.  Since all entries of $M$ are non-negative, so are the entries of $N$, so
the rank~$1$ condition says $N_{ii} = N_{jj} = 0$ if and only if $N_{ij} = 0$.
By the ``if'' direction, we can define a graph $G_N$ whose vertices and edges
correspond to the diagonal and off-diagonal zeroes of $N$ respectively.  By the
``only if'' direction, this is a complete graph.  Every position with a $0$ in
$M$ must be~$0$ for some rank~$1$ matrix in the decomposition, so the graphs
$G_N$ form a clique cover of $G_M$ as $N$ ranges over all rank~$1$ symmetric
matrices in the decomposition.  Thus, the rank of $M$ is exactly~$r$.

Next, we suppose $ M$ has at least one $1$ on the diagonal and let $M'\subsetneq
M$ be as in the statement. If there exist $ i$  and $ j$  such that $
M_{ii} =1$
but $ M_{ij}=0 $, then the rank of $ M$  is infinite by
Proposition~\ref{prop:inf-rank}. Otherwise, we claim
that the rank of  $M$ is $ r+1 $. Extending each rank 1 summand of a minimal
decomposition of $ M'$  by Lemma~\ref{lem:rank-1-extn} and adding in the all ones matrix shows
that the rank of $M$  is at most $ r+ 1 $. On the other hand, it is
straightforward to check that a rank 1 summand containing a 1 entry on the
diagonal can contain no zeroes, so does not contribute to a clique cover for $
G_{ M'} $. So the rank of $M$  is exactly $ r+1 $ in this case.
\end{proof}

\begin{remark} \label{rmk:bipartite_graph}
This characterization gives us two families of matrices which have rank~$n$ and $\lfloor
n^2/4 \rfloor$ respectively, namely those corresponding to the trivial graph with $n$ isolated vertices
and the complete bipartite graph $K_{\lfloor n/2\rfloor, \lceil n/2\rceil}$.
In the latter case, $K_{\lfloor n/2\rfloor, \lceil n/2\rceil}$ is triangle-free,
so no clique can consist of more than one edge.  On the
other hand, there are $\lfloor n^2/4 \rfloor$ edges in the graph, so $\lfloor
n^2/4 \rfloor$ cliques are needed in a cover.
In fact, these two examples have the maximum possible rank for $n\times n$
matrices, as shown below.
\end{remark}

\begin{thm} \label{thm:max-rank}
Suppose that $M$ is a symmetric $n\times n$ matrix with $M_{ii} + M_{jj} \leq
2M_{ij}$ for all $i$ and~$j$.  Then the symmetric  Barvinok rank of $M$ is at
most $\max\{n, \lfloor n^2/4 \rfloor\}$, 
and this bound is tight.
Thus, every matrix with finite rank has rank at most $\max\{n, \lfloor
n^2/4\rfloor\}$.
\end{thm}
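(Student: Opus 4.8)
The bound is tight already by Remark~\ref{rmk:bipartite_graph}: the $0/1$ matrices with zero diagonal coming from the $n$-vertex edgeless graph and from $K_{\lfloor n/2\rfloor,\lceil n/2\rceil}$ satisfy $M_{ii}+M_{jj}=0\le 2M_{ij}$, and by Proposition~\ref{prop:zero-inf-rank} their symmetric Barvinok ranks are $n$ and $\lfloor n^2/4\rfloor$ respectively. So the content is the upper bound, and the plan is to turn it into a purely combinatorial covering problem and then run an Erd\H{o}s--Goodman--P\'osa--style induction.

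First I would normalize the diagonal to zero. Setting $u_i=M_{ii}/2$ and $\tilde M_{ij}=M_{ij}-u_i-u_j$, the hypothesis gives $\tilde M_{ii}=0$ and $\tilde M_{ij}\ge 0$; since the $(i,j)$ entry of $(w+u)^T\odot(w+u)$ is $(w_i+w_j)+(u_i+u_j)$, the map $w\mapsto w+u$ is a bijection between rank~$1$ decompositions of $\tilde M$ and of $M$, so it suffices to treat the case $M_{ii}=0$, $M_{ij}\ge 0$. In this normalization a rank~$1$ matrix $w^T\odot w$ is $\ge M$ exactly when $w\ge 0$ and $w_i+w_j\ge M_{ij}$ for all $i,j$; it \emph{covers} the diagonal position $i$ iff $w_i=0$, and the off-diagonal position $\{i,j\}$ iff $w_i+w_j=M_{ij}$. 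The goal becomes: cover all $n+\binom{n}{2}$ positions with at most $\max\{n,\lfloor n^2/4\rfloor\}$ such matrices. (Note $n\le\lfloor n^2/4\rfloor$ for $n\ge 4$, so for $n\ge 4$ the target is $\lfloor n^2/4\rfloor$.)

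I would prove the upper bound by induction on $n$, with base cases $n\le 4$ checked directly (there the bound is $n$). For the step, peel off one vertex $v$ and cover the ``new'' positions — the diagonal at $v$ and the edges $\{v,w\}$, $w\ne v$ — using at most $\lfloor n/2\rfloor=\lfloor n^2/4\rfloor-\lfloor(n-1)^2/4\rfloor$ matrices, after which the inductive hypothesis handles the principal submatrix on $[n]\setminus\{v\}$. To do this, form the auxiliary ``compatibility'' graph $H_v$ on $[n]\setminus\{v\}$ with $\{w,w'\}$ an edge iff $M_{vw}+M_{vw'}\ge M_{ww'}$; a matching edge $\{w,w'\}$ of $H_v$ yields a single rank~$1$ matrix (built via Lemma~\ref{lem:rank-1-extn}) with $v$-coordinate $0$, $w$-coordinate $M_{vw}$, $w'$-coordinate $M_{vw'}$, rest large, which lies above $M$ precisely by the edge condition and covers $\{v,w\}$, $\{v,w'\}$ and the diagonal at $v$; an unmatched vertex costs one more matrix. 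Thus if $H_v$ has a matching missing at most one vertex, the $n-1$ new edges are covered with $\lfloor n/2\rfloor$ matrices and the induction closes: $\mathrm{rank}(M)\le \mathrm{rank}(M_{4})+\bigl(\lfloor n^2/4\rfloor-\lfloor 16/4\rfloor\bigr)\le\lfloor n^2/4\rfloor$. When $M$ has an off-diagonal zero $H_v$ tends to be dense (the constraint $M_{vw}+M_{vw'}\ge M_{ww'}$ is easy when $M_{ww'}$ is small), but in the regime where all off-diagonal entries are positive one first observes that no rank~$1$ matrix $\ge M$ can cover two diagonals at once (it would force $w_i=w_j=0$ and hence $M_{ij}\le 0$), so $\mathrm{rank}(M)\ge n$, and the bookkeeping is correspondingly delicate.

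The main obstacle is exactly the combinatorial heart: showing that for every such $M$ some vertex $v$ makes $H_v$ have a matching missing at most one vertex — equivalently, that among all-positive matrices the worst case is genuinely the balanced complete bipartite pattern. This is the weighted generalization of the Erd\H{o}s--Goodman--P\'osa theorem that the edges of any $n$-vertex graph can be covered by $\lfloor n^2/4\rfloor$ cliques (which, via Proposition~\ref{prop:zero-inf-rank} and the clique-cover description there, settles the $0/1$ case). I would try to adapt the standard deletion proof of that theorem, choosing $v$ using an extremal entry of $M$ — e.g.\ a vertex minimizing $\max_w M_{vw}$, or one incident to the largest entry, which forces a universal vertex in $H_v$ — and a Tutte--Berge argument to rule out a large Tutte set in $H_v$; verifying the finitely many small-$n$ base cases (in particular $\mathrm{rank}\le 4$ for $4\times 4$ matrices) would be done by a direct case analysis on orientations of $K_n$.
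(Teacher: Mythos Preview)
Your normalization to zero diagonal and your identification of the problem as a covering question are exactly right, and match the paper. The gap is precisely where you locate it: you never prove that some vertex $v$ has a compatibility graph $H_v$ with a near-perfect matching, and your suggested heuristics do not close it. For instance, taking $v$ incident to the largest entry only forces one universal vertex $u$ in $H_v$; the rest of $H_v$ can still be as sparse as a star (try $M_{ij}=\min(i,j)$ with zero diagonal and $v=1$: then $H_1$ is the star centred at $2$, so its maximum matching has a single edge). So the single-vertex induction, as stated, does not obviously go through.

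The paper avoids this difficulty by peeling off \emph{two} vertices at once rather than one, and choosing them cleverly: take $\{1,2\}$ to be the pair realizing the \emph{minimum} off-diagonal entry. Then for every $i\ge 3$ the inequality $M_{12}\le M_{1i}\le M_{1i}+M_{2i}$ is automatic, so the rank-$1$ matrix built from the vector $(M_{1i},M_{2i},\infty,\ldots,0,\ldots,\infty)$ (with $0$ in slot $i$) dominates $M$ and simultaneously covers $(1,i)$, $(2,i)$, and the diagonal at $i$. One such matrix for each $i=4,\ldots,n$, plus two more handling the $\{1,2,3\}$ block, account for all entries in rows $1$ and $2$; the remaining $(n-2)\times(n-2)$ submatrix is handled inductively. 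The count is
\[
\Big\lfloor \tfrac{(n-2)^2}{4}\Big\rfloor + (n-3) + 2 \;=\; \Big\lfloor \tfrac{n^2}{4}\Big\rfloor,
\]
so the induction closes without any matching lemma. (There is a small wrinkle: for the base cases $n=4,5$ one needs the weaker inductive hypothesis that the $(n-2)\times(n-2)$ submatrix can be written with $\lfloor(n-2)^2/4\rfloor$ summands after possibly relaxing a single diagonal entry; this is supplied by the explicit $2\times2$ and $3\times3$ decompositions.)

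In short, the insight you are missing is that the minimum-entry pair gives you, for free, exactly the compatibility inequalities your matching argument was trying to manufacture; and pairing $(1,i)$ with $(2,i)$ rather than pairing neighbours of a single vertex is what makes the combinatorics trivial.
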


\begin{proof}
Subtracting $M_{ii}/2$ from
the $i$th row and column for all $i$ does not change the rank, so we can assume
that the diagonal entries of
$M$ are $0$ and hence, by hypothesis, the off-diagonal
entries are non-negative.

The statement  is trivial for $n=1$.
For $n=2$, we have 
\begin{equation} \label{eqn:decomp-2}
M = 
\begin{bmatrix}
0 & M_{12} \\
M_{12} & 2 M_{12}
\end{bmatrix} \oplus \begin{bmatrix}
2M_{12} & M_{12} \\
M_{12} & 0
\end{bmatrix}.
\end{equation}
For $n=3$, we assume, without loss of generality, that $M_{12} \geq M_{23}$.
Then
\begin{equation}
M =
\begin{bmatrix}
0 & M_{12} & M_{13} \\
M_{12} & 2M_{12} & M_{12} \!+\! M_{13} \\
M_{13} & M_{12}\! + \!M_{13} & 2 M_{13}
\end{bmatrix}
\oplus
\begin{bmatrix}
\infty & \infty & \infty \\
\infty & 0 & M_{23} \\
\infty & M_{23} & 2M_{23}
\end{bmatrix}
\oplus
\begin{bmatrix}
\infty & \infty & \infty \\
\infty & \infty & \infty \\
\infty & \infty & 0
\end{bmatrix}, \label{eqn:decomp-3}
\end{equation}
where ``$\infty$'' is as in Remark~\ref{rmk:rank-1-extn}.
For $n$ at least $4$, the proof is by induction, using
the following lemma:

\begin{lemma} \label{lem:max-rank-induction}
Let $n \geq 4$.  Suppose that for any $(n-2) \times (n-2)$ matrix $N$ of finite
rank, there exists a matrix $N'$ of rank at most $\lfloor (n-2)^2/4\rfloor$,
such that
$N'$ is identical to~$N$ except possibly in one diagonal entry, where the
entry of $N'$ is greater than or equal to the entry of~$N$. Then any $n\times
n$ matrix has rank at most~$\lfloor n^2/4\rfloor$.
\end{lemma}

\begin{remark}
The exceptional diagonal entry in the hypothesis makes the statement of the
lemma slightly stronger than what is required for an inductive proof of an
$\lfloor n^2/4 \rfloor$ upper bound. However, we will
use the lemma to establish the base cases of the induction, $n=4$ and $n=5$,
which require a weaker hypothesis because for $n=2$ and $n=3$ the $\lfloor n^2/4
\rfloor$ upper bound on rank does not hold.
\end{remark}

\begin{proof}[Proof of Lemma~\ref{lem:max-rank-induction}]
Let $M$ be an $n\times n$ matrix with finite rank.
Without loss of
generality, assume that the entry $M_{12}$ is minimal among all off-diagonal
elements. We apply the hypothesis to the principal submatrix indexed by $\{3,
\ldots, n\}$. 
Applying Lemma~\ref{lem:rank-1-extn}, we have a collection of at most $\lfloor
(n-2)^2/4\rfloor$ rank $1$ matrices whose tropical sum agrees with $M$ except
for the first two rows and columns and possibly one diagonal entry, which,
without loss of generality, we assume to be $M_{44}$. For each $4 \leq i \leq
n$, take the rank~$1$ matrix which has arbitrary large values except for the
$\{1,2,i\}$ principal submatrix, which is:
\begin{equation*}
\begin{bmatrix}
2M_{1i} & M_{1i} + M_{2i} & M_{1i} \\
M_{1i} + M_{2i} & 2M_{2i} & M_{2i} \\
M_{1i} & M_{2i} & 0
\end{bmatrix}.
\end{equation*}
Note that since $M_{12}$ was chosen to be minimal, we have that $M_{12} \leq
M_{1i} + M_{2i}$.

Finally, switching indices $1$ and $2$ if necessary, we can assume
that $M_{13} \geq M_{23}$, and hence $M_{12}+M_{13} \geq M_{23}$. Then, we take
two matrices which are ``$\infty$'' outside
of the $\{1,2,3\}$ principal matrices, which are, respectively,
\begin{equation*}
\begin{bmatrix}
0 & M_{12} & M_{13}\\
M_{12} & 2M_{12} & M_{12} + M_{13} \\
M_{13} & M_{12} + M_{13} & 2M_{13}
\end{bmatrix}
\quad \mbox{and} \quad
\begin{bmatrix}
\infty & \infty & \infty \\
\infty & 0 & M_{23} \\
\infty & M_{23} & 2M_{23} 
\end{bmatrix},
\end{equation*}
recalling the meaning of ``$\infty$'' from Remark~\ref{rmk:rank-1-extn}.

This yields a decomposition of $M$ into at most $\lfloor (n-2)^2/4
\rfloor + (n-3) + 2 = \lfloor n^2/4 \rfloor$ symmetric rank~$1$ matrices.
\end{proof}

To complete the proof of the Theorem~\ref{thm:max-rank}, we note that taking all
but the last term
of~(\ref{eqn:decomp-2}) and~(\ref{eqn:decomp-3}) gives the hypothesis to
Lemma~\ref{lem:max-rank-induction} for $n=4$ and~$5$. The desired upper bound
follows by induction, which consists of applying
Lemma~\ref{lem:max-rank-induction} with $N' = N$. 
Finally, Remark~\ref{rmk:bipartite_graph} shows that this bound is tight.
\end{proof}

\begin{theorem}
The dimension of the space of symmetric $n \times n$ matrices of symmetric
Barvinok rank at
most~$r$ is ${n+1 \choose 2} - {n-r+1 \choose 2}$, which is the dimension of the
classical secant variety, i.e.\ the space of classical symmetric matrices of
classical rank at most~$r$.
\end{theorem}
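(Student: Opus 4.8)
The plan is to prove the two inequalities separately and then match them to the classical side. Write $d_r := \binom{n+1}{2}-\binom{n-r+1}{2}$; a routine binomial identity gives $d_r = rn-\binom{r}{2}$, and a standard parameter count shows that the variety of classical symmetric $n\times n$ matrices of rank at most~$r$ also has dimension $rn-\binom r2$ (parametrize by an $n\times r$ column-space matrix and an $r\times r$ symmetric form modulo $\operatorname{GL}_r$, obtaining $nr+\binom{r+1}{2}-r^2=nr-\binom r2$). We may assume $1\le r\le n$; for $r\ge n$ the set in question is the full-dimensional cone $\{M:M_{ii}+M_{jj}\le 2M_{ij}\}$ of Theorem~\ref{thm:max-rank}, of dimension $\binom{n+1}{2}=d_n$.

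\emph{Upper bound.} I would invoke Draisma's principle~\cite{d}: the $r$th tropical secant set of the tropical Veronese is contained in the tropicalization of the $r$th classical secant variety of the classical variety of rank~$1$ symmetric matrices. Concretely, given points $v_1,\dots,v_r$ of the tropical Veronese, lift each to a rank~$1$ symmetric matrix over the Puiseux series whose coordinatewise valuation is $v_j$, scale the $j$th lift by $t^{a_j}$ for generic $a_j\in\RR$, and observe that the valuation of the sum of the scaled lifts is exactly $\bigoplus_j(v_j+a_j)$ since no coordinatewise cancellation occurs; letting $a\to 0$ and using that tropicalizations are closed shows $\bigoplus_j v_j$ lies in the tropicalization of the classical $r$th secant variety. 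Since tropicalization preserves dimension, the tropical secant set has dimension at most $rn-\binom r2 = d_r$.

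\emph{Lower bound.} I would exhibit an explicit $d_r$-parameter family of symmetric Barvinok rank $\le r$ matrices on which the parametrization is a local immersion. Introduce real parameters $v^{(k)}_i$ for $1\le k\le r$ and $k\le i\le n$ — there are $\sum_{k=1}^r(n-k+1)=d_r$ of them — and for each $k$ complete $v^{(k)}$ to a vector in $\RR^n$ by setting the remaining coordinates $v^{(k)}_i$ ($i<k$) equal to one huge constant $C$, so $N^{(k)}:=(v^{(k)})^T\odot v^{(k)}$ is genuinely rank~$1$ as in Remark~\ref{rmk:rank-1-extn}. Put $M=\bigoplus_{k=1}^r N^{(k)}$, a matrix of symmetric Barvinok rank at most~$r$. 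Evaluating at the point $v^{(k)}_i=-kT$ with $T$ large and $C\gg rT$, one checks that for every entry $(i,j)$ the minimum defining $M_{ij}$ is attained uniquely, so this point lies in the interior of a linear region of the piecewise-linear parametrization; moreover for $1\le k\le m\le n$ that minimum is attained by the summand $N^{(k)}$, so there $M_{kk}=2v^{(k)}_k$ and $M_{km}=v^{(k)}_k+v^{(k)}_m$. These $d_r$ relations express every parameter as a linear function of the entries $M_{km}$, hence the differential of the parametrization is injective at this point, its image contains a $d_r$-dimensional manifold, and the tropical secant set has dimension at least $d_r$.

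Putting the bounds together gives dimension exactly $d_r=\binom{n+1}{2}-\binom{n-r+1}{2}$, equal to the dimension of the classical secant variety by the count above. The only genuine obstacle is the lower bound: one must produce a family that is honestly $d_r$-dimensional and does not secretly degenerate — a naive generic choice of the $v^{(k)}$ can, for example, let a single summand dominate every entry and collapse $M$ to rank~$1$. The staircase support pattern together with the basepoint $v^{(k)}_i=-kT$ is engineered so that summand $k$ is the strict minimizer at all positions $(k,m)$ with $m\ge k$, which simultaneously pins the linear region down to full dimension and makes parameter recovery transparently linear; verifying the finitely many strict inequalities involved is then routine. If one prefers to avoid Draisma, the upper bound can instead be obtained by bounding, stratum by stratum, the rank of the linear map attached to each combinatorial type of minimizers and showing that a top-dimensional stratum forces that map to have a kernel of dimension at least $\binom r2$, but this is a messier case analysis than the soft argument via classical geometry.
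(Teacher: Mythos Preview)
Your proof is correct and follows essentially the same route as the paper: the upper bound via containment of the tropical secant set in the tropicalization of the classical secant variety (the paper cites Bieri--Groves directly, you spell out the Puiseux lift \`a la Draisma), and the lower bound via the identical ``staircase'' family $v^{(k)}=(C,\dots,C,v^{(k)}_k,\dots,v^{(k)}_n)$ with later vectors dominating earlier ones so that entry $(k,m)$ is realized by summand $k$. One small slip: in your sentence ``for $1\le k\le m\le n$ that minimum is attained by the summand $N^{(k)}$'' you need $k\le r$, since $N^{(k)}$ only exists for $k\le r$; with that restriction the $d_r$ entries $M_{km}$ indeed recover all parameters linearly, exactly as in the paper.
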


\begin{proof}
Let $D = {n+1 \choose 2} - {n -r+1 \choose 2}$.
The tropical secant variety is contained in the tropicalization of the classical
secant variety, so the dimension is at most $D$, by the Bieri-Groves
Theorem~\cite[Thm.\ A]{bg}.
Thus, it is sufficient to find an open neighborhood in which the tropical variety has
dimension~$D$. For $i$ from~$1$ to~$r$, let $v_i = (C, \ldots, C, v_{i,i},
\ldots, v_{i,n})$ be a vector with $C$ for the first $i-1$ entries. Choose
the coordinates $v_{i+1,j}$ to be smaller than all the $v_{i,j}$ and let
$C$ be very large. Then,
\begin{equation*}
v_1^T \odot v_1 \oplus \cdots \oplus v_r^T \odot v_r =
\begin{bmatrix}
2v_{11} & v_{11} + v_{12} & \cdots & v_{11} + v_{1n} \\
v_{11} + v_{12} & 2v_{22} & \cdots & v_{22} + v_{2n} \\
\vdots & \vdots & & \vdots \\
v_{11} + v_{1n} & v_{22} + v_{2n} & \ldots  & 2v_{rn}
\end{bmatrix}
\end{equation*}
This matrix is an injective function of the vector entries $v_{ij}$ for $i \leq
r$ and $j \geq i$. Thus, it defines a neighborhood of the $r$th secant set of
the desired dimension
\begin{equation*}
n + (n-1) + \cdots + (n-r+1) = {n+1 \choose 2} - {n-r+1
\choose 2} = D. \qedhere
\end{equation*}
\end{proof}

\section{Star tree rank}\label{sec:star-tree-rank}

Recall from the introduction that a star tree matrix is one which can be written
as $\pi(v^T \odot v)$ for $v \in \RR^n$ a row vector. The star tree matrices
from  a classical
linear space in the space of $n \times n$ dissimilarity matrices defined by the
tropical polynomials
\begin{equation} \label{eqn:basis-star-tree}
x_{ij} x_{kl} \oplus x_{ik} x_{jl}
\quad\quad\mbox{for $i$, $j$,
$k$, and $l$ distinct integers}.
\end{equation}
In this section, the deficiency graph will
always be taken with respect to this tropical basis.
The following lemma is an immediate consequence of Lemma~\ref{lem:rank-1-extn}.
\begin{lemma} \label{lem:star-tree-extn}
Let $M$ be an $m\times m$ star tree matrix, and let $n > m$ be
any integer and $C$ any real number. Then there exists an $n \times n$ star tree
matrix with $M$ as the upper left $m \times m$ submatrix and all other entries
greater than~$C$.
\end{lemma}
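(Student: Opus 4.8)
The final statement to prove is Lemma~\ref{lem:star-tree-extn}: given an $m\times m$ star tree matrix $M$, an integer $n>m$, and a real $C$, there is an $n\times n$ star tree matrix whose upper-left $m\times m$ block is $M$ and all of whose other entries exceed $C$.

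The plan is to reduce immediately to Lemma~\ref{lem:rank-1-extn}. By definition a star tree matrix is $\pi(v^T\odot v)$ for some row vector $v$, so the off-diagonal entries of $M$ determine (but are determined by) a symmetric Barvinok rank~$1$ matrix $\widetilde M = v^T\odot v$; here $\widetilde M$ is simply any symmetric rank~$1$ matrix with $\pi(\widetilde M)=M$, e.g.\ the one built from the vector $v$ witnessing that $M$ is a star tree matrix. First I would fix such a $v$ and set $\widetilde M = v^T \odot v$. Then I apply Lemma~\ref{lem:rank-1-extn} to the $m\times m$ symmetric rank~$1$ matrix $\widetilde M$, with the same $n$ and with the constant $C$, to obtain an $n\times n$ symmetric rank~$1$ matrix $N$ whose upper-left $m\times m$ submatrix is $\widetilde M$ and all of whose other entries are at least $C$. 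Finally I project: $\pi(N)$ is by construction a star tree matrix, its upper-left $m\times m$ block is $\pi(\widetilde M) = M$, and every entry of $\pi(N)$ outside that block is an off-diagonal entry of $N$ outside the upper-left block, hence at least $C$.

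There is a trivial technical gap to patch: Lemma~\ref{lem:rank-1-extn} guarantees the new entries are ``at least $C$'', whereas the statement here asks for entries ``greater than $C$''. This is handled by running Lemma~\ref{lem:rank-1-extn} with the constant $C+1$ (or any $C'>C$) in place of $C$; the resulting matrix then has all new entries $\geq C+1 > C$. So the whole argument is: pick a rank~$1$ lift $\widetilde M$ of $M$, extend it via Lemma~\ref{lem:rank-1-extn} with constant $C+1$, and project back down with $\pi$.

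I do not anticipate any real obstacle here — the lemma is, as the paper says, ``an immediate consequence of Lemma~\ref{lem:rank-1-extn}''. The only point requiring a moment's thought is that projection commutes correctly with the block structure, i.e.\ that deleting the diagonal of the extended rank~$1$ matrix $N$ and then restricting to the first $m$ coordinates gives the same thing as restricting $\widetilde M$ first and then deleting its diagonal; this is immediate since both operations act coordinatewise on matrix positions. So the proof is essentially two sentences once Lemma~\ref{lem:rank-1-extn} is invoked.
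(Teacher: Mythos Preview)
Your proposal is correct and is exactly the argument the paper has in mind: the paper simply declares the lemma ``an immediate consequence of Lemma~\ref{lem:rank-1-extn}'' without further comment, and your lift--extend--project description (together with the $C+1$ patch for the strict inequality) is precisely how that immediate consequence is cashed out.
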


Unlike the case of symmetric Barvinok rank, the star tree rank is always
finite.
\begin{thm} \label{thm:max-star-tree-rank}
For $n$ at least $3$, the star tree rank of a $n \times n$ dissimilarity
matrix~$M$ is at most $n-2$, and this bound is sharp. In particular, the
dissimilarity matrix defined
by $M_{ij} = \min\{i,j\}$ has star tree rank $n-2$.
\end{thm}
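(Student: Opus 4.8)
The plan is to prove the upper bound and the sharpness separately, using the deficiency-graph machinery of Section~\ref{sec:def-graph} for the lower bound on the specific matrix $M_{ij} = \min\{i,j\}$.

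For the upper bound, I would argue by induction on $n$, with the base case $n=3$ handled directly: any $3\times 3$ dissimilarity matrix is a star tree matrix (the space of star trees has the full dimension $\binom{3}{2}=3$ in this case, since the tropical polynomials in~\eqref{eqn:basis-star-tree} require four distinct indices and hence impose no conditions when $n=3$), so its star tree rank is~$1 = 3-2$. For the inductive step, given an $n\times n$ dissimilarity matrix $M$ with $n\geq 4$, I would strip off one leaf: apply the inductive hypothesis to the principal submatrix $M'$ on indices $\{1,\dots,n-1\}$ to obtain a decomposition of $M'$ into at most $n-3$ star tree matrices, extend each summand to an $n\times n$ star tree matrix via Lemma~\ref{lem:star-tree-extn} with the new row and column filled by sufficiently large values, and then add one more star tree summand whose job is to correct the last row and column. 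The key point is that a single star tree matrix $\pi(v^T\odot v)$ with $v_n$ very small and the other $v_i$ tuned appropriately realizes the entries $M_{in}$ for $i<n$ exactly (this is the same ``cherry'' idea used in the $3\times 3$ blocks in the proof of Theorem~\ref{thm:max-rank}), while being $\geq$ all the already-correct entries so that the tropical sum is unchanged there. That gives $(n-3)+1 = n-2$ summands.

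For sharpness, take $M_{ij} = \min\{i,j\}$. I would first observe that the minimum in each tropical binomial $x_{ij}x_{kl}\oplus x_{ik}x_{jl}$ (with $i<j<k<l$, say) is achieved uniquely at $M$, so the deficiency graph has an edge for each such quadruple; then I would show the chromatic number of this graph is $n-2$, so Proposition~\ref{prop:deficiency-hypergraph} forces the star tree rank to be at least $n-2$. Concretely, for $i<j<k<l$ one computes $M_{ij}+M_{kl} = i + k$ versus $M_{ik}+M_{jl} = i + j$; since $j<k$ the second is strictly smaller, so the unique minimal term is $x_{ik}x_{jl}$, contributing the edge $\{\,\{i,k\},\{j,l\}\,\}$ on the vertex set $\binom{[n]}{2}$. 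I would then exhibit an explicit proper coloring with $n-2$ colors --- for instance color the pair $\{a,b\}$ (with $a<b$) by $\min\{b-1,\,n-2\}$, or some similarly indexed scheme --- and check that no edge $\{\{i,k\},\{j,l\}\}$ with $i<j<k<l$ is monochromatic; and conversely give a clique of size $n-2$ (a set of $n-2$ pairs that are pairwise joined by edges) to show $n-2$ colors are necessary. Matching the upper bound from the induction, this shows the star tree rank of this $M$ is exactly $n-2$.

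The main obstacle I expect is the combinatorial bookkeeping in the sharpness half: correctly identifying which term of each binomial is minimal for the matrix $\min\{i,j\}$, and then pinning down the chromatic number of the resulting graph on $\binom{[n]}{2}$ exactly --- both producing an $(n-2)$-coloring and producing an $(n-2)$-clique obstructing any smaller coloring. The inductive upper bound is routine once Lemma~\ref{lem:star-tree-extn} and the cherry construction are in hand; it is essentially the star-tree analogue of the inductive step in Theorem~\ref{thm:max-rank}, but simpler because star tree rank never becomes infinite and we only add a single summand at each stage.
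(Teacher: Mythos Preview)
Your inductive upper bound is exactly the paper's argument, and your plan to use the deficiency graph of $M_{ij}=\min\{i,j\}$ for the lower bound is also what the paper does. But your proposed method for bounding the chromatic number below has a genuine gap.

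First, a small point: your description of the deficiency graph is incomplete. For $i<j<k<l$ the three pairings give $M_{ij}+M_{kl}=i+k$ and $M_{ik}+M_{jl}=M_{il}+M_{jk}=i+j$, so the binomials $x_{ij}x_{kl}\oplus x_{ik}x_{jl}$ and $x_{ij}x_{kl}\oplus x_{il}x_{jk}$ \emph{both} have a unique minimum. The deficiency graph therefore has two families of edges: the ``overlapping'' edge $\{i,k\}\text{--}\{j,l\}$ you found, and also the ``nesting'' edge $\{i,l\}\text{--}\{j,k\}$.

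The real problem is the clique idea. Every edge of the deficiency graph joins two \emph{disjoint} pairs, so any clique is a matching on $[n]$ and hence has size at most $\lfloor n/2\rfloor$. For $n\geq 5$ this is strictly smaller than $n-2$, so no $(n-2)$-clique exists and the bound $\omega\leq\chi$ is too weak. (Your proposed explicit coloring also breaks on nesting edges: with $k=n-1$ and $l=n$ the pairs $\{i,n\}$ and $\{j,n-1\}$ both receive color $n-2$.) The paper instead proves $\chi\geq n-2$ by an induction on~$n$: one analyzes the color class of the vertex $\{1,n\}$, uses the nesting edges to force every other vertex in that class to have the form $\{1,i\}$ or $\{j,n\}$, uses the overlapping edges to find a threshold $m$ separating these, and then either finds $n-3$ vertices of distinct other colors or recolors so that the color of $\{1,n\}$ is used only on vertices containing $n$, reducing to the $(n-1)\times(n-1)$ case. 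You will need an argument of this flavor; the clique shortcut is not available here.
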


\begin{proof}
The proof of the upper bound is by induction on $n$. For $n=3$, the equations
in~(\ref{eqn:basis-star-tree}) are trivial, and thus every $3\times 3$
dissimilarity matrix is a star
tree matrix.  For $n>3$, let $M$
be an $n \times n$ dissimilarity matrix, and denote by $M'$ the upper left
$(n-1) \times (n-1)$ submatrix. By the inductive hypothesis, we can write
$M'$ as the tropical sum of $n-3$ star tree matrices. We can
extend each of these to $n\times n$ star tree matrices by
Lemma~\ref{lem:star-tree-extn}, and their tropical sum will
agree with $M$ except in the last column and row. Let $w$ be the vector defined by $w_{i} = M_{in} + C$
for $i < n$ and $w_n = -C$, for $C$ a sufficiently large number. Then the tropical sum of
the previous $n-3$ matrices together with $\pi(w^ T\odot w)$ equals~$M$.

Now let $M$ be the dissimilarity matrix defined by $M_{ij} = \min\{i,j\}$ as in
the statement. We claim that the deficiency graph of $M$ has
chromatic number~$n-2$.  For every $i < j < k < l$, we have 
\begin{equation} \label{eqn:m-4-point}
M_{ik} + M_{jl} = M_{il} + M_{jk} = i + j < M_{ij} + M_{kl} = i + k,
\end{equation}
so the deficiency graph has an edge between $ik$ and $jl$, and an edge between
$il$ and $jk$. We refer to these types of edges as ``overlapping'' and
``nesting'' respectively.

We prove that the deficiency graph of $M$ has chromatic number at least $n-2$ by
induction on $n$.  The case of $n=3$ is clear. Let $n$ be greater than $3$ and
fix a coloring of the deficiency graph of $M$. Let $S$ be  the set of nodes of
the same color~$c$ as the node~$1n$. There is a ``nesting'' edge between $1n$
and every node other than those of the form $1i$ or $in$ for some~$i$.  Thus,
every node in $S$ is either of the form $1i$ or $in$.  Furthermore, because of
the ``overlapping'' edges, there must be an integer $m$ such that if $1i$ is in
$S$, then $i\leq m$ and if $jn$ is in $S$, then $m \leq j$. Now consider the set
of $n-3$ nodes consisting of $1i$ for $m < i < n$ and $jn$ for $1 < j < m$. By
our construction of~$m$, none of them is in~$S$. Therefore, if they have
distinct colors, then the coloring has at least $n-2$ colors, which is what we
wanted to show.

Otherwise, two of these nodes have the same color, and by the
``overlapping'' edges and symmetry we can assume that they are $1i$ and $1i'$,
with $m < i < i'$, which have color~$c' \neq c$.  Any node $jk$ with $j < k$ and
$1 < j < i$
will share an edge with one of these two nodes, so cannot have color $c'$.  On
the other hand, if the node $1l$ with $l \leq m$ is adjacent to 
$jk$ with $j <k$, then we must have $ 1 < j < l$. But $l \leq m < i$, so $jk$
cannot have color~$c'$. Thus, we can assume that all of the
nodes $1l$ with $l \leq m$ also have color~$c'$ without changing the fact that
the coloring is
proper. With this change, the only nodes with color~$c$ are of the form~$jn$. By
restricting to the nodes with coordinates less than~$n$, we have a coloring of
the deficiency graph of the $(n-1)\times (n-1)$ matrix without the color~$c$, so
by the inductive hypothesis we're done.
\end{proof}

\begin{remark}
The matrix with maximal star tree rank in the previous theorem is in fact in the
Grassmannian, i.e.\ it has tree rank~$1$. Indeed, from~(\ref{eqn:m-4-point}), we
see that the four-point condition holds.  Alternatively, $M$ arises as the
distance matrix of the following weighted tree.
Let $T$ be the caterpillar tree with $n$ internal vertices, connected in
order by edges of weight $-1/2$. The $i$th leaf vertex is connected to the $i$th
internal vertex by an edge of weight $i/2$. For $i < j$, the distance from
leaf~$i$ to leaf~$j$ is $i/2 + (j-i)(-1/2) + j/2 = i$, which is equal to the
corresponding entry in the matrix $M$ from Theorem~\ref{thm:max-star-tree-rank}.
In order to make a proper phylogenetic tree, we should remove the first and last
internal vertices and combine the adjacent edge weights.
\end{remark}

Next, we give a graph theoretic
characterization of the star tree rank of $0/1$-matrices.  For $M$ a
$0/1$ dissimilarity matrix, we define $G_M$ to be the graph whose edges
correspond to the zeroes of $M$. As in the case of symmetric Barvinok rank,
we can characterize the star tree rank of~$M$ in terms of covers of~$G_M$, this
time by
both cliques and star trees. We will also say that a cover
of~$G_M$ by cliques and star trees is a \emph{solid cover} if for every pair of
distinct vertices $i$ and $j$ either:
\begin{enumerate}
\item there is an edge between $i$ and $j$,
\item either $i$ or $j$ belongs to a clique in the cover,
\item either $i$ or $j$ is the center of a star tree in the cover, or
\item both $i$ and $j$ are leaves of the same star tree.
\end{enumerate}

\begin{prop} \label{prop:star-tree-rank-01}
Let $M$ be a $0/1$ dissimilarity matrix.  Let $r$ be the minimal number of
graphs in a cover of $G_M$ by cliques and star trees, such that every edge (but
not necessarily every vertex) is in some element of the cover. Then $M$ has star
tree rank either $r$ or~$r+1$.

Moreover, if $G_M$ has a solid cover by $r$ graphs, then $M$ has star tree
rank~$r$.
\end{prop}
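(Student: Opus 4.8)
The plan is to prove three things: that the star tree rank of $M$ is at least $r$, that it is at most $r+1$, and that it is at most $r$ when $G_M$ admits a solid cover; together with the first inequality these give both assertions. For the lower bound, start from a decomposition $M = N_1 \oplus \cdots \oplus N_s$ into star tree matrices. Since $M$ is a $0/1$ matrix and tropical sum is coordinatewise minimum, no $N_k$ can have a negative entry, so each $N_k$ is a non-negative star tree matrix $\pi(v^T \odot v)$. A short case analysis on the signs of the entries of $v$ shows that the set of zero entries of such a matrix is either a clique (all $v_i \ge 0$: the zeros are the pairs with $v_i = v_j = 0$) or a star (exactly one $v_i$ negative, say at $c$: the zeros are the pairs $(c,\ell)$ with $v_\ell = -v_c$). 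Because $M \le N_k$ entrywise, the zero set of $N_k$ consists of edges of $G_M$, and because every edge of $G_M$ is a zero of $M$ it is a zero of some $N_k$; hence the zero sets of $N_1, \dots, N_s$ form a cover of the edges of $G_M$ by cliques and star trees, so $s \ge r$.

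For the upper bounds I would build explicit star tree matrices. For a clique $G$ on vertex set $S$, let $N(G) = \pi(u^T \odot u)$ with $u_i = 0$ for $i \in S$ and $u_i = 1$ otherwise; it vanishes exactly on the edges of $G$ and equals $1$ on pairs meeting $S$ in exactly one vertex. For a star $G$ with center $c$ and leaves $L$, let $N(G) = \pi(u^T \odot u)$ with $u_c = -\tfrac12$, $u_\ell = \tfrac12$ for $\ell \in L$, and $u_j = \tfrac32$ for every other $j$; this is non-negative, vanishes exactly on the edges of $G$, equals $1$ both on the pairs $(c,j)$ with $j$ a non-leaf and on leaf–leaf pairs, and is $\ge 2$ on all remaining pairs. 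Finally let $J = \pi(u^T \odot u)$ with $u = (\tfrac12, \dots, \tfrac12)$ be the all-ones dissimilarity matrix, which is a star tree matrix. Crucially, no entry of any of these matrices lies strictly between $0$ and $1$. Now, given a cover $G_1, \dots, G_r$ of the edges of $G_M$ by cliques and star trees, the sum $N(G_1) \oplus \cdots \oplus N(G_r)$ is $0$ on the edges of $G_M$ and, by the previous sentence, $\ge 1$ elsewhere; adding $J$ leaves the zeros alone and pulls every other entry down to exactly $1$, so $N(G_1) \oplus \cdots \oplus N(G_r) \oplus J = M$ and the star tree rank is at most $r+1$.

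The heart of the argument is the ``moreover'' clause: if $G_1, \dots, G_r$ is a solid cover, I claim $N(G_1) \oplus \cdots \oplus N(G_r) = M$ with no extra summand. The zero entries are correct as above, and at a non-edge $(i,j)$ of $G_M$ the sum is automatically $\ge 1$, so it suffices to exhibit, for each such $(i,j)$, a summand $N(G_k)$ whose $(i,j)$-entry equals $1$. Solidity supplies one of the alternatives (2), (3), (4). In case (2), some clique $G_k$ contains, say, $i$; then $j \notin G_k$, for otherwise $(i,j)$ would be an edge of $G_M$, so $N(G_k)_{ij} = 0 + 1 = 1$. In case (3), $i$ (say) is the center of a star $G_k$; then $j$ is not a leaf of $G_k$, again because $(i,j)$ is a non-edge, so $N(G_k)_{ij} = -\tfrac12 + \tfrac32 = 1$. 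In case (4), $i$ and $j$ are leaves of a common star $G_k$, and $N(G_k)_{ij} = \tfrac12 + \tfrac12 = 1$. In every case the $(i,j)$-entry of the tropical sum is $1$; hence the sum equals $M$, the rank is at most $r$, and combined with the lower bound it is exactly $r$.

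The one genuinely delicate design choice — and the step I expect to need the most care — is the weighting of the star building block. The leaf–leaf pairs must receive value exactly $1$ in order to meet solidity's alternative (4); with the ``obvious'' choice of center weight $-1$ and leaf weight $1$ they would receive $2$ instead. Shifting the center weight to $-\tfrac12$ (hence leaves to $\tfrac12$, non-leaves to $\tfrac32$) fixes this while still keeping the zero set equal to the star's edge set and keeping every other entry at least $1$, the properties used everywhere else. The remaining work is routine bookkeeping: checking that degenerate cover elements (isolated vertices, a single edge regarded as $K_2 = K_{1,1}$, a star with one leaf) cause no problems, and repeatedly using that these matrices take no value in the open interval $(0,1)$, which is what turns ``entry $\ne 0$'' into ``entry $\ge 1$'' in each tropical sum.
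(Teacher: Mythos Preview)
Your proof is correct and follows essentially the same approach as the paper: identical building blocks (the $0/1$ vector for cliques and the $-\tfrac12,\tfrac12,\tfrac32$ vector for stars), the same case analysis on the sign pattern of $v$ for the lower bound, and the all-ones matrix as the extra summand. Your write-up is in fact more thorough than the paper's, which simply asserts that a solid cover yields equality without spelling out the case-by-case check against alternatives (2)--(4).
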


\begin{proof}
Let $G_1, \ldots, G_r$ be a cover of $G_M$ by cliques and star trees. For $G_i$
a clique, define $v_i$ to be the $0/1$ vector whose $0$ entries correspond to
the vertices of the clique.  If $G_i$ is a star tree consisting of a central
vertex $c_i$ and edges to vertices in the set $I_i$, then define $v_i$ to be the
row vector which is $-1/2$ in the $c_i$ entry, $1/2$ for the entries
corresponding to $I_i$ and $3/2$ otherwise.  In either case, define $M_i =
\pi(v_i^ T \odot v_i)$. Then $M_i$ has $0$ entries corresponding to the edges of
$G_i$. Thus, the tropical sum of the $M_i$ has the same $0$ entries as $M$.
Moreover, if the cover is a solid cover then the tropical sum is equal
to $M$, so $M$ has rank at most~$r$. Otherwise, some of the positive entries of
the tropical sum are greater than $1$. By
additionally taking the
tropical sum with the all ones matrix, we see that $M$ has rank at most $r+1$.

Conversely, suppose that $M_i = \pi(v_i^ T \odot v_i)$ is a term in
a representation of~$M$ as the tropical sum of star tree matrices. Then we claim
that the zeroes of $M_i$ correspond to
either a star tree or a complete graph. If all entries of $v_i$ are
non-negative, then the zeroes of $M_i$ correspond to the complete graph on the
vertices where $v_i$ is $0$. Otherwise, since $M_i$ must be non-negative, there
can be at most one negative entry of~$v_i$, say with value $-a$; then all
other entries must be at least $a$. Then the $0$ entries of $M_i$ correspond to
the star tree with edges between the entry with $-a$ and the entries with
value~$a$. Thus, any decomposition of $M$ as the tropical sum of star tree
metrics yields a cover of $G_M$ by cliques and star trees, so $M$ has tropical
rank at least $r$.
\end{proof}


We do not know if the definition of a solid cover can be weakened in any way. In
other words, we do not know of any $0/1$ matrices $M$ such that $r$ is the
minimal size of a cover of $G_M$ by cliques and star trees, and $G_M$ does not
have a solid cover of size $r$, but $M$ has star tree rank~$r$.

In contrast to symmetric Barvinok rank, the upper bound of $n-2$ on the star
tree rank of an $ n\times n$  dissimilarity matrix cannot be achieved by a
$0/1$ matrix for large~$n$.  Recall that the \emph{Ramsey
number} $R(k,k)$ is the smallest integer such that any graph on at least
$R(k,k)$
vertices has either a clique or a independent set of size~$k$. Then we have the
following stronger bound on the star tree rank of a $0/1$ matrix.

\begin{proposition}
For $n \geq R(k,k)$, any $n\times n$
$0/1$ dissimilarity matrix has star tree rank at most $n - k +1$.
\end{proposition}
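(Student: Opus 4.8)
The plan is to combine the graph-cover characterization from Proposition~\ref{prop:star-tree-rank-01} with a Ramsey-type pigeonhole argument that lets us remove $k-1$ vertices "for free." Start with an arbitrary $0/1$ dissimilarity matrix $M$, and consider the graph $G_M$ whose edges record the zeroes of $M$. By the definition of $R(k,k)$, since $n \geq R(k,k)$ the graph $G_M$ contains either a clique $K$ on $k$ vertices or an independent set $S$ on $k$ vertices. The idea is that such a structure can absorb $k$ vertices into essentially one piece of a cover, leaving an $(n-k)$-vertex problem, to which the bound $n-2$ from Theorem~\ref{thm:max-star-tree-rank} (or rather its relatives) no longer applies in full but a weaker bound of the form "$(n-k) - 2 + (\text{small constant})$" does; the arithmetic should land on $n - k + 1$.

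In more detail: if $G_M$ has a clique $K$ on a vertex set $A$ of size $k$, restrict attention to the principal submatrix $M'$ on the complementary $n-k$ vertices; this is an $(n-k) \times (n-k)$ dissimilarity matrix, which by Theorem~\ref{thm:max-star-tree-rank} has star tree rank at most $n-k-2$ (when $n-k \geq 3$). Extend each of those $n-k-2$ star tree matrices to $n \times n$ star tree matrices via Lemma~\ref{lem:star-tree-extn}, so their tropical sum agrees with $M$ on the $A^c \times A^c$ block and is large elsewhere. I then need to account for the remaining entries — those involving at least one vertex of $A$ — using a bounded number of additional star tree matrices; the clique structure on $A$ (all those entries being $0$) is exactly what should make two or three extra rank-$1$ pieces suffice. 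If instead $G_M$ has an independent set $S$ of size $k$, I run the symmetric argument: the $S \times S$ block of $M$ is all ones, and a single "all ones" star tree matrix (or a cheap fixed number of them) together with the recursive decomposition of the complement handles it. Each branch is tuned so that the total count is $(n-k-2) + (\text{constant}) \le n-k+1$.

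The main obstacle is the bookkeeping in the "bridging" step: I have a decomposition of the $A^c \times A^c$ block into $n-k-2$ star tree matrices and I must cover all entries $M_{ij}$ with $i \in A$ (or $j \in A$) using at most $3$ further star tree matrices, while not disturbing the already-correct block. For each vertex $a \in A$ this is a "rank-$1$ column extension" type move as in the proofs of Theorems~\ref{thm:max-rank} and~\ref{thm:max-star-tree-rank}, where one appends a column using a vector of the form $w_i = M_{ia} + C$; the subtlety is doing it for all $k$ vertices of $A$ simultaneously within a constant number of matrices rather than $k$ of them, which is where the clique (all-zero) or independent-set (all-one) hypothesis on $A$ is essential — it forces the relevant cross-entries to be consistent enough that one star tree (centered appropriately) covers many of them at once. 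I also need to check the small-$n$ edge cases ($n-k < 3$, i.e. $k$ close to $n$) separately, but there the claimed bound $n-k+1$ is weak and follows directly from Theorem~\ref{thm:max-star-tree-rank} or trivially.
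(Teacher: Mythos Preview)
Your bridging step is a genuine gap, and it cannot be fixed as stated. After decomposing the $(n-k)\times(n-k)$ block on $A^c$ into $n-k-2$ star tree matrices, you still need to handle all entries $M_{ia}$ with $a\in A$ and $i\in A^c$. These cross-entries are completely arbitrary $0/1$ values: the clique or independent-set hypothesis constrains only the $A\times A$ block, not the $A\times A^c$ block. In particular, the zero cross-entries can form an arbitrary bipartite graph between $A$ and $A^c$, and such a graph cannot in general be covered by a bounded number of stars and cliques. For a concrete obstruction, take $A$ a $k$-clique and let the cross-zeros form a matching of size $k$ between $A$ and $k$ vertices of $A^c$; any cover by stars and cliques needs at least $k$ pieces just for those edges. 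So your count $(n-k-2)+3$ is not attainable.

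The paper's argument avoids this difficulty by not separating the $A^c$ block from the cross-entries at all. Instead, it simply takes one star tree centered at \emph{each} vertex $v\in A^c$; such a star covers every edge of $G_M$ incident to $v$, including all cross-edges from $v$ into $A$. These $n-k$ stars therefore cover every edge of $G_M$ except possibly edges entirely inside $A$. If $A$ is an independent set there are no such edges, giving a cover of size $n-k$ and hence rank at most $n-k+1$ by Proposition~\ref{prop:star-tree-rank-01}. If $A$ is a clique, one adds that clique as the $(n-k+1)$st piece; this is moreover a solid cover, so the rank is at most $n-k+1$ directly. The moral is that the star trees should be centered at the vertices \emph{outside} the Ramsey set, so they absorb the cross-edges for free.
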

\begin{proof}
By the assumption on $n$, the graph $G_M$ has either a clique of size $k$ or an
independent set of size $k$.
In the former case, we can cover
$G_M$ by a star tree centered at each vertex not part of the clique, together
with the clique itself. This gives a solid cover by $n-k+1$ subgraphs.
In the latter
case, we can just take the star trees centered
at the vertices not in the independent set, giving a cover of
$G_M$ by $n-k$ subgraphs. In either case,
Proposition~\ref{prop:star-tree-rank-01} shows that $M$
has rank at most $n-k+1$.
\end{proof}

\begin{corollary}
For $n \geq 18$, every $n \times n$ $0/1$ dissimilarity matrix has star tree
rank at most $n-3$.
\end{corollary}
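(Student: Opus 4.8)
The plan is to obtain this as an immediate specialization of the preceding proposition, taking $k = 4$. That proposition asserts that for $n \geq R(k,k)$, every $n\times n$ $0/1$ dissimilarity matrix has star tree rank at most $n-k+1$; substituting $k=4$ yields the bound $n-3$ as soon as $n \geq R(4,4)$. Concretely, for such $n$ the graph $G_M$ contains either a clique or an independent set of size $4$: in the first case one covers $G_M$ by the clique together with a star tree centered at each of the $n-4$ remaining vertices, which is a solid cover of size $n-3$, and in the second case the $n-4$ star trees centered at the vertices outside the independent set already form a cover, so Proposition~\ref{prop:star-tree-rank-01} gives star tree rank at most $n-3$ in either case. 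Thus the entire content of the statement reduces to identifying the correct threshold.

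The only input not already established in the paper is therefore the value of the diagonal Ramsey number, and in fact only the upper bound $R(4,4) \leq 18$ is needed. This is the classical theorem of Greenwood and Gleason: every two-coloring of the edges of $K_{18}$ contains a monochromatic $K_4$, proved by the standard pigeonhole argument (a vertex of $K_{18}$ has at least $9$ neighbors in one color class; the induced subgraph on those $9$ vertices, together with the bound $R(3,4) = 9$, forces a monochromatic triangle in that color or a triangle in the other, either way completing a $K_4$). I would simply cite this fact. The matching lower bound $R(4,4) = 18$ is not logically required here but explains why the numerical bound $n \geq 18$ appears and suggests that the corollary's hypothesis is essentially the natural one.

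Since this is a corollary with a one-line derivation, there is no real obstacle: the proof is a direct substitution into the previous proposition, and the sole nontrivial ingredient is the externally known value $R(4,4) = 18$. I would not attempt to address whether $n - 3$ can be improved for small $n$, or whether the threshold $18$ is sharp for this particular statement, as those questions go beyond what the corollary claims.
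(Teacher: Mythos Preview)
Your proposal is correct and takes essentially the same approach as the paper: the corollary is obtained by applying the preceding proposition with $k=4$ and citing the classical fact that $R(4,4)=18$. The paper's own proof is in fact just the single sentence ``The Ramsey number $R(4,4)$ is $18$~\cite{r},'' so your write-up is, if anything, more detailed than necessary.
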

\begin{proof}
The Ramsey number $R(4,4)$ is $18$~\cite{r}.
\end{proof}

\begin{theorem}
Let $ r$  and $ n$  be positive integers. Then the dimension of the space of
dissimilarity $ n\times n$  matrices of star tree rank at most $ r$  is
\begin{equation*}
\min\left\{{ n+1\choose 2} -{ n-r+1\choose 2},~{ n\choose 2}\right\}.
\end{equation*}
\end{theorem}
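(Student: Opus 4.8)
The plan is to bound the dimension from above using the Bieri–Groves theorem and from below by exhibiting an explicit full-dimensional family, exactly mirroring the proof of the symmetric Barvinok rank dimension formula. Write $D_1 = {n+1 \choose 2} - {n-r+1 \choose 2}$ and $D_2 = {n \choose 2}$; we must show the dimension equals $\min\{D_1, D_2\}$. The ambient space of dissimilarity matrices has dimension $D_2 = {n \choose 2}$, so the dimension is trivially at most $D_2$. For the other upper bound, the $r$th tropical secant set is contained in the tropicalization of the $r$th classical secant variety of the classical space of off-diagonal rank~$1$ symmetric matrices studied in~\cite{drton}; by~\cite[Thm.~A]{bg} its dimension is at most the dimension of that classical secant variety. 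I would either cite the dimension computation from~\cite{drton} or, more self-containedly, note that the classical secant variety is the image of the corresponding secant variety of rank~$\leq r$ symmetric matrices under the linear projection $\pi$, hence has dimension at most $\dim(\text{symmetric rank} \leq r) = D_1$. Combining the two gives the upper bound $\min\{D_1, D_2\}$.

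For the lower bound I would split into the two cases according to which of $D_1, D_2$ is smaller (equivalently, whether $r \leq n-1$ or $r \geq n-1$, roughly). When $D_1 \leq D_2$, I would reuse the construction from the symmetric Barvinok rank theorem: take $v_i = (C, \ldots, C, v_{i,i}, \ldots, v_{i,n})$ with $C$ very large and successive layers much smaller than the previous, form $\bigoplus_i \pi(v_i^T \odot v_i)$, and check that the resulting dissimilarity matrix depends injectively on the $D_1$ free coordinates $v_{ij}$ with $i \leq r$, $j > i$ (the diagonal entries $2v_{ii}$ are now discarded by $\pi$, but the off-diagonal entries $v_{ii} + v_{ij}$ for $j > i$ still recover all the parameters, since the lowest layer reaching each position determines it). This gives an open subset of the secant set of dimension $D_1$. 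When $D_2 \leq D_1$, I need to show the secant set is full-dimensional in the space of all dissimilarity matrices; here I would invoke Theorem~\ref{thm:max-star-tree-rank}, which says every $n \times n$ dissimilarity matrix has star tree rank at most $n-2$, so for $r \geq n-2$ the secant set is everything — but for the boundary values of $r$ where $D_2 \leq D_1$ but $r < n-2$ I need a genuine full-dimensional family, obtained by perturbing the construction above and observing that once enough layers are present the map onto off-diagonal entries becomes surjective onto an open set.

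The main obstacle I anticipate is the careful bookkeeping in the lower-bound construction: verifying that the tropical sum $\bigoplus_i \pi(v_i^T \odot v_i)$ really is a local diffeomorphism (or at least has full-rank Jacobian) onto a set of the claimed dimension, keeping track of which layer "wins" in each coordinate when the parameters are in the prescribed generic cone, and handling the transition case where the two expressions $D_1$ and $D_2$ cross. A secondary subtlety is making the upper bound argument via $\pi$ rigorous: one must confirm that $\pi$ restricted to the classical secant variety does not drop dimension in a way that would make $D_1$ not the right bound — but since we only need $\dim \leq D_1$, the crude bound "image under a linear map has dimension at most that of the source" suffices, so this should not be a real difficulty. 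I expect the whole argument to run in under a page once the symmetric Barvinok case is in hand.
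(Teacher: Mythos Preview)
Your upper-bound argument is fine, but the lower-bound construction has a genuine gap. Transplanting the symmetric Barvinok family and applying $\pi$ does \emph{not} give a $D_1$-dimensional family: for each layer $i$ with $i<r$, the only matrix entries where $v_i$ achieves the minimum are $M_{ij}=v_{i,i}+v_{i,j}$ for $j>i$, and these $n-i$ sums determine the $n-i+1$ parameters $v_{i,i},v_{i,i+1},\ldots,v_{i,n}$ only modulo the one-dimensional shift $v_{i,i}\mapsto v_{i,i}+c$, $v_{i,j}\mapsto v_{i,j}-c$. In the symmetric case the diagonal entry $2v_{i,i}$ kills this ambiguity, but $\pi$ throws it away. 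The last layer $r$ also controls the block $\{M_{jk}:r<j<k\}$ and is therefore recoverable (when $n-r\ge 2$), but layers $1,\ldots,r-1$ each lose a dimension, so your family has dimension only $D_1-(r-1)$. Concretely, for $n=5$, $r=2$ you produce an $8$-dimensional family where $D_1=9$. (Incidentally, your parameter count is already off by $r$: there are $\sum_{i=1}^r(n-i)=D_1-r$ coordinates $v_{ij}$ with $i\le r$ and $j>i$, not $D_1$.)

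The paper's construction repairs exactly this defect. It fixes an ordering of the pairs $S=\{(i,j):r<i<j\le n\}$ and, for each layer $k\le r$, places the two coordinates $v_{k,i},v_{k,j}$ for the $k$th pair in $(0,1)$ while the remaining $v_{k,\ell}$ with $\ell>r$ exceed $2$; the coordinates $v_{k,\ell}$ for $k\le\ell\le r$ are staggered so that layer $i$ still wins on row~$i$. This forces layer $k$ to win at the single extra position given by the $k$th pair, contributing a new independent function $v_{k,i}+v_{k,j}$ and restoring the lost dimension. Because $\lvert S\rvert={n-r\choose 2}$, the construction yields $\binom{n}{2}-\binom{n-r}{2}+\min\bigl(r,\binom{n-r}{2}\bigr)=\min(D_1,D_2)$ independent coordinates in one stroke, so no separate treatment of the regime $D_2\le D_1$ is needed, and your vague ``perturbation'' argument for that case is avoided entirely.
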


\begin{proof}
Let $D = \min\left\{{n+1 \choose 2} - {n-r+1 \choose 2}, {n \choose 2}\right\}$
be the dimension from the theorem statement. The dimension cannot be any larger
than $D$ by the Bieri-Groves Theorem~\cite[Thm.\ A]{bg}, because $D$ is the
dimension of the classical secant variety, according to Theorem~2
in~\cite{drton}.
Therefore, it is sufficient to construct a matrix with star tree rank~$r$ which
has a $D$-dimensional neighborhood of star tree rank~$r$ matrices. If $r \geq
n$, then $D$ is ${n \choose 2}$, the dimension of the set of $n\times n $
dissimilarity matrices. Since higher secant sets cannot have smaller dimension,
it is sufficient to assume $r \leq n$.

We will construct $r$ vectors $v_1, \ldots, v_r$, with $v_{k,i}$ denoting the
$i$th entry of $v_k$, and then define $M$ to be the tropical sum of the star
tree matrices $\pi(v_k^T \odot v_k)$.
First, we fix any order on the set of pairs of distinct integers $S = \{(i,j) :
r < i < j \leq n\}$. Then, for $1 \leq k \leq r$ and $r < i \leq n$, we choose
$v_{k,i}$ as
follows: if the $k$th pair of integers includes $i$, then we choose $v_{k,i}$
in the range $0<v_{k,i} < 1$ and otherwise we choose $v_{k,i} > 2$.
For $i$ in the range $k \leq i \leq r$, we choose $v_{k,i}$ inductively
beginning with $k=r$.  We choose $v_{r, r}$ arbitrarily.
Then for $k < r$, and $k \leq i \leq r$, we choose  $v_{k, i}$ to be much
greater than any of the $v_{k+1,j}$ already chosen. Finally, let $C$
be a large real number and set $v_{k,
i}$ equal to $C$, for $i < k$. Let $M$ be the tropical sum of the $M_k :=
\pi(v_k^T \odot v_k)$.

We claim that the set of matrices which can be gotten in this way forms a
$D$-dimensional affine linear neighborhood.  For $i \leq r$ and $i < j$, the
$(i,j)$ entry of $M$ comes from $M_i$ and in particular, is equal to $v_{i,i} +
v_{i,j}$. For a fixed $i$, and taking $j > i$, these entries give us $n-i$
linearly independent functions on the matrix~$M$. Moreover, if $(i,j)$ is the
$k$th pair in the ordering on $S$, and $k \leq r$, then the $(i,j)$ entry of $M$
comes from $M_k$ and is equal to $v_{k,i} + v_{k,j}$. These are linearly
independent from each other and from all of the previous functions. Since the
size of~$S$ is ${n-r \choose 2}$, the number of linearly independent functions
on the matrix $M$ is
\begin{align*}
(n-1) + (n-2) + \cdots + (n-r) + &\min\left\{r, {n -r \choose 2}\right\} \\
&= {n \choose 2} - {n - r \choose 2} + \min\left\{r, {n-r \choose
2}\right\} \\
&= \min\left\{{n+1 \choose 2} - {n-r+1\choose 2}, {n\choose 2}\right\},
\end{align*}
which is the desired dimension, $D$.
\end{proof}

\begin{remark}
In fact, the difficult part of Theorem~2 in~\cite{drton} is proving the lower
bound on the dimension of the classical secant variety. Our computation of the
dimension of the tropical secant variety provides an alternative proof of this
lower bound.
\end{remark}

\section{Tree rank}\label{sec:tree-rank}

The tropical Grasmmannian $G_{2,n}$ is the tropical variety defined by the
$3$-term Pl\"ucker relations:
\begin{equation} \label{eqn:plucker}
x_{ij} x_{k\ell} \oplus x_{ik} x_{j\ell} \oplus x_{i\ell} x_{jk}
\quad\quad\mbox{for all $i < j < k < \ell$}.
\end{equation}
This condition
is equivalent to coming from the distances along a weighted tree which has
negative weights along the internal edges~\cite[Sec.\ 4]{ss}. In this section,
we will always take the deficiency graph to be with respect to the Pl\"ucker
relations in~(\ref{eqn:plucker}).

As with the previous notions of rank, the tree rank of a $0/1$ matrix can be
characterized in terms of covers of graphs. 
For any disjoint subsets $I_1, \ldots, I_k
\subset [n]$ (not necessarily a partition), the \emph{complete $k$-partite
graph} is the graph which has an edge between the elements of $I_i$ and $I_j$
for all $i \neq j$. Complete $ k $-partite graphs are characterized by the
property that among vertices which are incident to some edge, the relation of
having a non-edge is a transitive relation.

\begin{remark}
The complete $k$-partite graphs defined above are exactly those graphs whose
edge set forms the set of bases of a rank~$2$ matroid on $n$ elements.
The transitivity of being a non-edge is equivalent to the basis exchange axiom.
Alternatively, each of the sets $I_1, \ldots, I_k$ partition the set of
non-loops in the matroid into parallel classes. See~\cite{o} for definitions of
these terms. In
the following proposition, we will see that the Pl\"ucker relations imply the
basis exchange axiom for the $0$ entries
of a non-negative tree matrix.
\end{remark}

\begin{prop} \label{prop:tree-rank-01}
Let $M$ be an $n \times n$ $0/1$ dissimilarity matrix and let $r$ be smallest size of a cover
of $G_M$ by complete $k$-partite subgraphs. As in
Proposition~\ref{prop:star-tree-rank-01}, we only
require every edge to be in the cover, not necessarily every vertex.
If $G_M$ has at most one isolated vertex then $M$ has
tree rank $r$. Otherwise, $M$ has tree rank $r+1$.
\end{prop}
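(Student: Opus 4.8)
The plan is to prove the lower bounds $\text{(tree rank of }M) \ge r$ in general and $\text{(tree rank of }M) \ge r+1$ when $G_M$ has at least two isolated vertices, together with the matching upper bounds $\text{(tree rank of }M)\le r+1$ in general and $\text{(tree rank of }M)\le r$ when $G_M$ has at most one isolated vertex; the two cases of the proposition then follow by comparing. The basic observation throughout is that if $M = N_1 \oplus \cdots \oplus N_s$ with each $N_i \in G_{2,n}$, then $(N_i)_{pq} \ge M_{pq} \ge 0$, so each $N_i$ has nonnegative off-diagonal entries, and the set $H_i$ of positions where $N_i$ is zero is a subgraph of $G_M$, since its edges are among the zeros of $M$.

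For the lower bound $\ge r$, I would first show that the zero set $H_i$ of any nonnegative $N_i \in G_{2,n}$ is a complete $k$-partite graph, by checking that among the vertices incident to some edge of $H_i$, having a nonzero entry of $N_i$ is a transitive relation. Indeed, if $(N_i)_{uv} > 0$, $(N_i)_{vw} > 0$ but $(N_i)_{uw} = 0$, pick $z \notin \{u,v,w\}$ with $(N_i)_{vz} = 0$, possible since $v$ is incident to an edge while $(N_i)_{vu}$ and $(N_i)_{vw}$ are nonzero; then in the Pl\"ucker relation on the four indices $u,v,w,z$ the term $(N_i)_{uw} + (N_i)_{vz} = 0$ is the unique minimum, because the other two terms are strictly positive, contradicting $N_i \in G_{2,n}$. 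Hence the $H_i$ are complete $k$-partite subgraphs of $G_M$ covering every zero of $M$, that is, every edge of $G_M$, so $s \ge r$.

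The crucial point is the improvement to $\ge r+1$ when $G_M$ has two isolated vertices $x \ne y$, and I expect this to be the main obstacle. Assume the tree rank equals $r$, the case $r = 0$ being immediate, so the $H_i$ form a \emph{minimum} cover and each therefore has at least one edge. Since $M$ is $0/1$ and $x$ is isolated, $M_{xy} = 1$, so $(N_i)_{xy} \ge 1$ for all $i$ and $(N_{i_0})_{xy} = 1$ for some $i_0$. Choose an edge $ij$ of $H_{i_0}$, so $(N_{i_0})_{ij} = 0$; as $x$ and $y$ are isolated in $G_M$ and $M$ is $0/1$, the entries $M_{ix}, M_{iy}, M_{jx}, M_{jy}$ all equal $1$, hence the corresponding entries of $N_{i_0}$ are all $\ge 1$. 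Now the Pl\"ucker relation on the four distinct indices $i,j,x,y$ fails: the term $(N_{i_0})_{ij} + (N_{i_0})_{xy} = 1$ is strictly smaller than the other two terms, each of which is $\ge 2$. This contradiction gives the tree rank is at least $r+1$.

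For the upper bounds I would realize each complete $k$-partite subgraph $H$ of $G_M$, with nonempty parts $I_1,\dots,I_k$ where $k \ge 2$ and set of isolated vertices $L$, as the exact zero set of an explicit tree matrix $N_H$: take a central vertex $c$, attach a vertex $u_j$ to $c$ by an edge of weight $-1/2$ and the leaves of $I_j$ to $u_j$ by edges of weight $1/2$ (contracting to a single weight-$0$ leaf edge when $|I_j| = 1$), and attach each vertex of $L$ directly to $c$ by an edge of weight $1$. A direct check shows $N_H$ is $0$ exactly on the edges of $H$, is $1$ whenever $p$ or $q$ lies in some $I_j$ with $pq \notin E(H)$, is $2$ when both $p,q \in L$, and lies in $G_{2,n}$ because its only internal edges (the $c$--$u_j$ edges) have negative weight. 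Applying this to a minimum cover $H_1,\dots,H_r$ of $G_M$ and adjoining the all-ones matrix $J$, a star tree matrix so that $J \in G_{2,n}$, gives $M = N_{H_1} \oplus \cdots \oplus N_{H_r} \oplus J$, so the tree rank is at most $r+1$. If $G_M$ has at most one isolated vertex then $J$ is redundant: every non-edge $pq$ of $G_M$ has an endpoint, say $p$, lying in a part of some $H_i$, and for that $i$ one has $(N_{H_i})_{pq} = 1$ exactly, since it is nonzero as $pq \notin E(H_i)$ and it is not $2$ as $p \notin L$; hence $N_{H_1} \oplus \cdots \oplus N_{H_r}$ already equals $M$ and the tree rank is at most $r$. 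Beyond the Pl\"ucker obstruction in the previous paragraph, the remaining work is the routine verification of the entries and internal-edge signs of this star-of-cherries tree.
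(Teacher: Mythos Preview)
Your proof is correct and follows essentially the same approach as the paper's own proof: the same ``star-of-cherries'' tree realizes each complete $k$-partite piece (with isolated vertices at distance~$2$ so that the all-ones matrix is needed in general), the same transitivity-via-Pl\"ucker argument shows that the zero set of each nonnegative tree matrix is complete $k$-partite, and the same Pl\"ucker obstruction on $\{i,j,x,y\}$ forces any term achieving the value~$1$ at a pair of isolated vertices to have empty zero set. Your write-up is in fact slightly more explicit than the paper's in justifying why the all-ones matrix can be dropped when $G_M$ has at most one isolated vertex.
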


\begin{proof}
Let $I_1, \ldots, I_k$ be disjoint sets defining a $k$-partite graph.  We
construct a tree which has $k+1$ internal vertices: one vertex $v_i$ for each $i
\in [k]$ and a vertex
$w$. Every element of $I_i$ has a branch of length $1/2$ to $v_i$ and each $v_i$
has a branch of length $-1/2$ to $w$. The elements of $J = [n] \setminus (I_1
\cup \cdots \cup I_k)$ are connected to $w$
by a branch of length $1$. This gives a distance matrix whose entries are $0$
for the edges of the $k$-partite graph, $2$ for the entries between elements of
$J$, and $1$ elsewhere. If $G_M$ has at most one isolated vertex, then the sum
of these matrices is $M$.  Otherwise, adding the matrix with all entries
entries equal to $1$ yields $M$.

Conversely, suppose we have a decomposition of $M$ as the sum of tree matrices.
For each tree matrix, we can define a graph on the vertices $[n]$ with edges
corresponding to the $0$ entries.
These form a cover of the graph of $M$, so we just need
to show that the graph $G_T$ coming from a tree matrix will be a complete
$k$-partite graph. For this, we need to show that the relation of having a
non-edge is a transitive relation among vertices which are incident to some
edge. Suppose that $(i,j)$ and $(j,k)$ are non-edges, but $(i,k)$ is an edge.
Also suppose that $j$ has an edge to some other vertex $\ell$. Then
$M_{ik} + M_{j\ell} = 0$, but $M_{ij}$ and $M_{jk}$ are positive and
$M_{k\ell}$ and $M_{i\ell}$ are non-negative, which contradicts the Pl\"ucker
relation.  Thus, $G_T$ is a complete $k$-partite graph, and we have proved the
theorem when $G_M$ has at most one isolated vertex.

Now suppose that $G_M$ has at least $2$ isolated vertices, $i$ and~$j$.  There
must be some tree matrix $T$ in the decomposition of $G_M$ such that $T_{ij} =
1$.
Suppose that the corresponding graph $G_T$ has an edge between two vertices $k$
and~$\ell$, which must be distinct from $i$ and~$j$ by assumption. Since $i$
and~$j$ are isolated, $T_{ik}$, $T_{i\ell}$, $T_{jk}$, and~$T_{j\ell}$ must each
be at least~$1$. But $T_{ij} + T_{kl} = 1$, which contradicts the Pl\"ucker
relation. Thus, $G_T$ must be the trivial graph, so the decomposition of $M$
must have one more term than a minimal cover of $G_M$. Therefore, $M$ has rank $r
+ 1$.
\end{proof}

Note that by taking the $I_i$ in the definition of $k$-partite graph to be
singletons, we get complete graphs, and by taking $k=2$ with $I_1$ a singleton
and $I_2$ any set disjoint from $I_1 $, we get star trees. Together with
Propositions~\ref{prop:star-tree-rank-01} and~\ref{prop:tree-rank-01}, this
confirms, for $0/1$-matrices, the second inequality
in~(\ref{eqn:rank-inequals}).

\begin{lemma} \label{lem:extn-tree-rank-1}
Let $M$ be an $m\times m$ tree matrix, $n > m$ an integer, and $C$ any
real number. Then there exists an $n \times n$ tree matrix $N$, whose upper left
$m\times m$ submatrix is $M$ and such that the other entries are each at
least~$C$. 
\end{lemma}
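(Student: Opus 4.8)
The plan is to mimic the structure of Lemma~\ref{lem:rank-1-extn} and Lemma~\ref{lem:star-tree-extn}, since the tropical Grassmannian $G_{2,n}$ is, like the Veronese, cut out by concrete quadratic relations (the $3$-term Pl\"ucker relations) and has an explicit combinatorial model: tree matrices are exactly distance matrices of weighted trees with negative weights on internal edges. So the natural move is to work with the tree, not with the Pl\"ucker relations directly. First I would take $M$ to be the distance matrix of a weighted tree $T$ on leaf set $[m]$ (with possibly negative internal edge weights), guaranteed to exist because $M$ is a tree matrix. Then I would build a new tree $T'$ on leaf set $[n]$ by attaching each of the new leaves $m+1, \ldots, n$ to $T$ by a single pendant edge of very large weight $L$, all hung off of one fixed attachment point of $T$ (say subdividing an existing edge or just attaching at an existing vertex — either works, and attaching all of them at a common vertex keeps the bookkeeping trivial). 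The internal edge weights of $T'$ are exactly those of $T$, so $T'$ still has negative weights on its internal edges, hence its distance matrix $N$ is a tree matrix; and the upper-left $m \times m$ block of $N$ is the distance matrix along $T$ restricted to the old leaves, which is $M$. Finally, each entry $N_{ij}$ with $j > m$ is a path length in $T'$ that includes at least the pendant edge of weight $L$ at leaf $j$, so $N_{ij} \geq L - B$, where $B$ is a bound on the negative contributions and finite path lengths within $T$; choosing $L$ large enough makes every such entry at least $C$.

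The one genuine subtlety is that internal edges carry negative weights, so a naive ``all new entries are at least $L$'' claim is false — a path from a new leaf to an old leaf traverses the new pendant edge of weight $L$ but then may traverse internal edges of $T$ with negative weight, and similarly paths between two new leaves traverse two large pendant edges but also the internal edges joining their attachment points. The fix is quantitative: let $B$ be the sum of the absolute values of all (negative) internal edge weights of $T$ plus the maximum pairwise distance among old leaves; then every entry involving a new index is at least $L - B$ in absolute terms, and in fact it is positive and large once $L > C + B$, because the single large positive pendant edge dominates. I should also note that if $T$ happens to be degenerate (e.g. $m$ small, or $T$ a star), attaching new leaves at the center still produces a valid tree; and one can always realize $M$ by a tree, even a non-generic one, so there is no issue with existence.

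An alternative, slicker route avoids choosing a tree at all: observe that $G_{2,n}$ contains the image under $\pi$ of the tropical Veronese (the space of star trees, by the discussion after~(\ref{eqn:rank-inequals})), but that does not directly help since $M$ need not be a star tree matrix. Better, one can cite that $G_{2,m}$ embeds into $G_{2,n}$ coordinatewise by the tree-attachment map just described, and simply verify the Pl\"ucker relations~(\ref{eqn:plucker}) for $N$ directly: for $i<j<k<\ell$ all in $[m]$ the relation holds because $M$ is a tree matrix; for a quadruple involving at least one new index, after choosing $L$ large, the minimum among the three Pl\"ucker monomials is attained by a pairing that keeps the large pendant edges ``balanced'' and is attained at least twice, by the same four-point reasoning used in the proof of Proposition~\ref{prop:tree-rank-01} and in the Remark after Theorem~\ref{thm:max-star-tree-rank}. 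I expect the tree-surgery argument to be cleaner to write, so that is the one I would present, relegating the Pl\"ucker-relation check to a one-line remark that the four-point condition is preserved under adding far-away leaves.

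The main obstacle, then, is purely the bookkeeping of negative internal weights: making sure the chosen constant $L$ is large enough to dominate not just $C$ but also the cumulative negative internal-edge contributions along any path in $T'$, and being careful that this holds simultaneously for entries of the form $N_{ij}$ with exactly one of $i,j$ new and for those with both new. Once $L$ is pinned down by a single inequality of the shape $L > C + (\text{total negative internal weight of } T) + (\text{diameter of } M)$, the rest is immediate, and the lemma follows. I would phrase the final choice of $L$ explicitly so the reader can see the bound is finite and effective.
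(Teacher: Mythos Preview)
Your proposal is correct and follows essentially the same approach as the paper: realize $M$ as the distance matrix of a weighted tree $T$, then attach the new leaves $m+1,\ldots,n$ at a fixed vertex of $T$ via pendant edges of sufficiently large weight, so that the resulting distance matrix $N$ extends $M$ and has all new entries at least~$C$. The paper is slightly more economical with constants---it attaches at an internal vertex $v$ and takes the pendant weight to be $\max\{\tfrac{1}{2}C,\,C-C'\}$, where $C'$ is the minimum distance from $v$ to a leaf of $T$---but your cruder bound via total negative internal weight plus diameter works just as well.
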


\begin{proof}
The matrix $M$ encodes the distances on some weighted tree $T$ on $m$ leaves.
Pick any internal vertex $v$ of~$T$ and let $C'$ be the smallest distance
between $v$ and a leaf of $m$. Let $T'$ be the tree on $n$ leaves formed from
$T$ by attaching each leaf $i$ with $m < i \leq n$ to $v$ by an edge with weight
$\max\{\frac{1}{2} C, C-C'\}$. Let $N$ be the distance matrix of $T'$ and $N$ is
a tree matrix with the desired properties.
\end{proof}

\begin{prop}
The dimension of the set of dissimilarity $n \times n$ matrices of tree rank at
most $r$ is the dimension of the classical secant variety,
\begin{align*}
{n \choose 2} - {n-2r\choose 2} &\quad\mbox{if } r \leq \frac{n}{2}, \\
{n \choose 2} &\quad\mbox{if } r \geq \frac{n-1}{2}.
\end{align*}
\end{prop}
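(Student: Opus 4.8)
The plan is to bound the dimension from above using the Bieri--Groves Theorem, exactly as in the two previous dimension theorems, and then exhibit an explicit family of tree matrices whose dimension matches. For the upper bound, the tropical secant set of tree rank $\leq r$ is contained in the tropicalization of the $r$th classical secant variety of the Grassmannian $G_{2,n}$ in its Pl\"ucker embedding (restricted to off-diagonal coordinates), so by~\cite[Thm.\ A]{bg} its dimension is at most the dimension of that classical secant variety. The classical secant dimension ${n\choose 2} - {n-2r\choose 2}$ for $r\leq n/2$, and ${n\choose 2}$ once $r\geq (n-1)/2$, is the expected dimension and is presumably known (a reference or short argument via the parameter count $r$ copies of $2(n-1)$ parameters minus the $\mathrm{GL}_2$-overlaps, giving $r(2n-2) - \binom{r}{2}\cdot\text{(something)}$, but I would rather cite it than reprove it). So it remains to produce a lower bound by construction.

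For the lower bound, I would build a tropical sum of $r$ tree matrices with as many independent parameters as possible, in the same style as the proof for star tree rank. Concretely, take $r$ caterpillar (or star-like) trees $T_1, \ldots, T_r$ on the leaf set $[n]$, where $T_k$ has its ``active'' leaves among a block $B_k \subseteq [n]$ of size roughly $2$ (when $2r \le n$, one can use disjoint pairs $B_k = \{2k-1, 2k\}$ together with a handful of edges into the remaining leaves), and whose remaining edge weights are taken enormously large so that in the tropical sum $M = \bigoplus_k \pi(N_k)$ each entry $M_{ij}$ is controlled by a single $N_k$. The $k$th tree contributes the distances within $B_k$ plus its distances to a few other leaves; by choosing the large parameters in a strictly decreasing hierarchy (as with the constant $C$ in Lemma~\ref{lem:star-tree-extn} and the earlier proofs) I can guarantee that the map from the collection of free tree-edge weights to the entries of $M$ is locally injective, hence $M$ has a neighborhood of the asserted dimension consisting of tree rank $\leq r$ matrices. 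The parameter count should be arranged so that $T_1$ contributes $n-1$ new independent entries (a full leaf's worth of pendant edges visible because $T_1$ dominates there), $T_2$ contributes $n-2$, and so on down to $T_r$ contributing $n-r$ — except that the internal-edge weights of each caterpillar give only boundedly many extra parameters once the pendant edges are used up — and summing $(n-1)+(n-2)+\cdots+(n-r)$ plus the correction terms yields exactly ${n\choose 2} - {n-2r\choose 2}$ when $r\leq n/2$, and saturates at ${n\choose 2}$ once $r \geq (n-1)/2$ (here one uses that higher secant sets cannot have smaller dimension, so it suffices to treat $r \le \lceil n/2 \rceil$).

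The main obstacle is the bookkeeping in the lower-bound construction: making precise which tree-edge weights survive as independent coordinates of $M$ after the tropical minimum is taken, and checking that the resulting count is exactly ${n\choose 2}-{n-2r\choose 2}$ rather than something off by the internal-edge contributions. The delicate point is that a caterpillar on $n$ leaves has $2n-3$ edges but only $n-1$ of its distance entries can be made ``dominant'' in the presence of the other summands, and one must verify that the negative internal edges of $T_k$ do not accidentally create extra low entries that collide with entries governed by $T_{k'}$ for $k' \ne k$; the hierarchy of large constants is exactly what rules this out, but stating it cleanly (analogous to ``choose $v_{k,i}$ much greater than any $v_{k+1,j}$ already chosen'' in the star tree proof) requires care. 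Once the independence of the chosen entries is established, the Jacobian is block-triangular and the dimension count is immediate.
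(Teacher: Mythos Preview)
Your upper-bound strategy via Bieri--Groves matches the paper's (the paper cites \cite[Thm.\ 2.1i]{cgg} for the classical secant dimension). The lower-bound construction, however, has a genuine gap: your parameter count is wrong, not merely awkward to bookkeep.

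The target dimension is ${n\choose 2}-{n-2r\choose 2}=r(2n-2r-1)$, which for $r=1$ is $2n-3$ and for $r=2$ is $4n-10$. Your proposed count $(n-1)+(n-2)+\cdots+(n-r)$ gives $n-1$ and $2n-3$ in these cases, off by a term linear in~$n$; the discrepancy is $rn-\tfrac{3}{2}r^2-\tfrac{1}{2}r$, which is not a ``boundedly many extra'' correction from internal edges. The reason is that you are implicitly treating each summand like a star tree (one free parameter per leaf), whereas a tree matrix on $m$ leaves carries $2m-3$ free parameters. Each tree in the decomposition must therefore be arranged so that \emph{two} full rows' worth of entries, not one, are visible in the tropical sum.

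The paper sidesteps this by induction on $n$ in steps of two: assume an $(n-2)\times(n-2)$ matrix $N$ of tree rank $\le r-1$ with a neighborhood of dimension ${n-2\choose 2}-{n-2r\choose 2}$, extend it trivially, and take its tropical sum with a single caterpillar on \emph{all} $n$ leaves (ordered $1,3,4,\ldots,n,2$) whose pendant weights are so negative that it dominates $N$ on rows $1$ and~$2$ but is dominated by $N$ elsewhere. The first two rows then recover all $2n-3$ edge weights of the caterpillar, and $2n-3+{n-2\choose 2}-{n-2r\choose 2}={n\choose 2}-{n-2r\choose 2}$. Unrolling the induction, tree $k$ is a caterpillar on the last $n-2(k-1)$ leaves contributing $2(n-2k+2)-3$ parameters, and $\sum_{k=1}^r(2n-4k+1)=r(2n-2r-1)$ as required. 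Your blocks $B_k=\{2k-1,2k\}$ are the right shadows of this picture, but each tree must span all remaining leaves, not just its block.
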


\begin{proof}
The tropical secant variety is contained in the tropicalization of the classical
variety, which has the given dimension by~\cite[Thm.\ 2.1i]{cgg}. Therefore, it
is sufficient to prove that the tropical secant variety has at least the given
dimension by the Bieri-Groves Theorem~\cite[Thm.\ A]{bg}.

To prove the lower bound on the dimension, first note that for $r = \lfloor
n/2\rfloor$, $n-2r$ is either $0$ or~$1$, so the first part of the statement
implies that the dimension of the $r$th
secant set is  ${n \choose 2}$, the dimension of the space of dissimilarity
$n \times n$ matrices. Since higher secant varieties are at least as large as
the preceeding ones, this implies the second part of the statement.

The proof of the first part, when $r \leq n/2$, is by induction on $n$. For $n
\leq 3$ all dissimilarity matrices have tree rank~$1$, because the four-point
condition is trivial. Thus, the dimension of the $1$st secant set is ${n \choose
2} = {n \choose 2} - {n-2 \choose 2}$, as desired.

Now suppose $n > 3$ and by the inductive hypothesis, let $N$ be an $(n-2) \times
(n-2)$ matrix of tree rank at most $r-1$ such that the locus of matrices
with tree rank at most $r-1$
has dimension ${n-2 \choose 2} - {n - 2r \choose 2}$ in a neighborhood of $N$.

Consider the caterpillar tree with leaves in the order $1$, $3$, $4$, \ldots,
$n$, $2$, pendant edge length $p_i$ for leaf $i$, and negative internal edges
$q_3, \ldots, q_{n-1}$. Thus, the first two rows and columns of the distance
matrix $M$ are defined by:
\begin{align*}
M_{1,2} = M_{2,1} &= p_1 + q_3 + \cdots + q_{n-1} + p_2, \\
M_{1,i} = M_{i,1} &= p_1 + \sum_{j=3}^{i-1} q_j + p_i \quad \mbox{for } i > 2,
\\
M_{2,i} = M_{i,2} & = p_i + \sum_{j=i}^{n-1} q_j + p_2 \quad \mbox{for } i > 2.
\end{align*}
Just from the values in these two rows, we can solve for the values of the edge
lengths:
\begin{align*}
p_1 &= \frac{1}{2}(M_{1,2} + M_{1,3} - M_{2,3}) \\
p_2 &= \frac{1}{2}(M_{1,2} + M_{2,n} - M_{1,n}) \\
p_i &= \frac{1}{2}(M_{1,i} + M_{2,i} - M_{1,2}) \quad\mbox{for } i > 2 \\
q_i &= \frac{1}{2}(M_{1,i+1} + M_{2,i} - M_{1,i} - M_{2,i})
\end{align*}
Therefore, the projection onto the first two columns has dimension $n + (n-3) =
2n-3$ in a neighborhood of this point.

Assume that the $p_i$ are sufficiently negative that the lower right
$(n-2)\times (n-2)$ submatrix of $M$ is less than $N$ in every entry. Then
\begin{equation*}
M \oplus \begin{bmatrix}
* & \infty & \infty  \\
\infty & * & \infty  \\
\infty & \infty & N
\end{bmatrix}
=
\begin{bmatrix}
* & M_{12} & M_{13} & \cdots & M_{1n} \\
M_{12} & * & M_{23} & \cdots & M_{2n} \\
M_{13} & M_{23} & * & \cdots & N_{1,n-2} \\
\vdots & \vdots & \vdots & & \vdots \\
M_{1n} & M_{2n} & N_{n-2,1} & \cdots & *
\end{bmatrix}
\end{equation*}
has tree rank at most $r$ and has a neighborhood of such matrices
of dimension
\begin{equation*}
2n - 3 + {n-2 \choose 2} - {n - 2r  \choose 2} = {n \choose 2} - {n-2r
\choose 2},
\end{equation*}
which is the desired expression.
\end{proof}

Unlike the cases of symmetric Barvinok rank and star tree rank, we do not know
the maximum tree rank of a $n\times n$ dissimilarity matrix for large $n$. We
have an upper bound of $n-2$ by Theorem~\ref{thm:max-star-tree-rank}, and we can
improve on this slightly:

\begin{thm}\label{thm:tree-rank-n-3}
For $n\geq 6$, a $n \times n$ dissimilarity matrix~$M$ has tree rank at most
$n-3$.
\end{thm}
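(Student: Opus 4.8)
The plan is to prove Theorem~\ref{thm:tree-rank-n-3} by a base-plus-induction argument that mirrors the structure of the proof of Theorem~\ref{thm:max-star-tree-rank}, but shaves off one more summand by handling the last \emph{three} leaves together rather than the last one or two. Concretely, given an $n\times n$ dissimilarity matrix $M$ with $n\geq 6$, I would split $[n]$ as $\{1,\dots,n-3\}\cup\{n-2,n-1,n\}$, write the upper-left $(n-3)\times(n-3)$ submatrix $M'$ as a tropical sum of tree matrices by induction, extend each summand to an $n\times n$ tree matrix via Lemma~\ref{lem:extn-tree-rank-1} (choosing the constant $C$ large so these summands do not interfere with the last three rows/columns), and then account for the remaining entries---those in rows/columns $n-2,n-1,n$, including the $3\times 3$ block among them---using a bounded number of extra tree matrices. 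The inductive step thus reduces to a base case: showing that for small $n$ (the residual cases not covered by the recursion, presumably $n=6,7,8$) a dissimilarity matrix has tree rank at most $n-3$, i.e.\ $3,4,5$ respectively, and these I would verify by giving explicit decompositions, possibly invoking the deficiency-graph/chromatic-number machinery of Section~\ref{sec:def-graph} only as a sanity check rather than a proof ingredient.

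\textbf{The key steps, in order.} First, I would reduce to the inductive skeleton: assume the theorem for $n-3$ (so $M'$ has tree rank at most $n-6$, once $n-3\geq 6$) and set up the extension of those $n-6$ summands to $n\times n$ tree matrices that agree with $M$ on the $\{1,\dots,n-3\}$ block and are enormous elsewhere. Second---and this is the crux---I would construct a fixed, $n$-independent number of \emph{additional} tree matrices that correct all entries touching $\{n-2,n-1,n\}$. The natural candidates are caterpillar trees: for each pair $(a,i)$ with $a\in\{n-2,n-1,n\}$ and $i<n-2$ I want a tree matrix realizing $M_{ai}$ in that entry while being huge elsewhere, together with tree matrices realizing the three entries $M_{n-2,n-1}$, $M_{n-2,n}$, $M_{n-1,n}$. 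Done naively this costs $3(n-3)+3$ summands, far too many; the point is to pack many of these corrections into a single tree. A single caterpillar on leaves $1,\dots,n-3$ with one of $\{n-2,n-1,n\}$ attached appropriately can, after a suitable choice of pendant and internal edge lengths, simultaneously realize an entire row's worth of the needed entries (this is exactly the trick used in the dimension computation for tree rank, where ``just from the values in these two rows we can solve for the edge lengths''). So I expect to get the row $n$ corrections in $O(1)$ summands, likewise rows $n-1$ and $n-2$, and the $3\times 3$ block in $1$ more, landing at a total of $(n-6)+c$ for a small constant $c$; the delicate part is arranging that $c=3$ rather than $c=4$ or $5$, which is precisely why the hypothesis $n\geq 6$ (and not a smaller value) appears and why separate base cases are needed.

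\textbf{The main obstacle} will be the simultaneous realization: making a single tree matrix (a) have its relevant block equal to the prescribed entries of $M$, (b) be large enough elsewhere that it does not spoil values already pinned down by other summands, and (c) actually be a valid tree matrix (the four-point condition, equivalently negative internal edge weights, must hold for whatever edge lengths the linear system forces on us). Controlling (c) is the subtle one, since the edge lengths are dictated by $M$ and need not a priori give a legitimate weighted tree; the standard workaround is to first translate (subtract a constant from a row and column, which does not change tree rank) so that the problematic entries are large and positive, pushing the solved-for pendant edges into a safe regime, exactly as in the proofs of Theorems~\ref{thm:max-rank} and~\ref{thm:max-star-tree-rank}. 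If a clean single-tree packing of a whole row proves too rigid, the fallback is to use two caterpillars per row (splitting the index range), which would still keep the constant $c$ bounded; one then checks that the resulting $c$ is at most what the base cases $n=6,7,8$ tolerate, and tightens the base cases by direct computation if necessary. I would close by remarking, as the authors already flag, that unlike symmetric Barvinok rank and star tree rank we do not expect this bound to be sharp, so no matching lower-bound construction is attempted here.
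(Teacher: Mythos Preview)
Your inductive scaffolding is sound but misallocates the difficulty. The extension step you spend most of the proposal on---adding three rows and columns at cost $c=3$---is in fact trivial and is exactly Lemma~\ref{lem:tree-rank-submatrix}: for each new index $i$, the single star tree matrix $\pi(v_i^T\odot v_i)$ with $(v_i)_i=-C$ and $(v_i)_j=C+M_{ij}$ for $j\neq i$ realizes the entire $i$th row of $M$ while being enormous elsewhere. No caterpillar packing, no solving for internal edge lengths, and no four-point-condition check is needed; one tree per new index always works, so there is nothing ``delicate'' about getting $c=3$ rather than $4$ or~$5$. Consequently the induction need not proceed in steps of three, and you do not need separate base cases $n=7,8$: once $n=6$ is known, a single application of Lemma~\ref{lem:tree-rank-submatrix} with $m=n-6$ gives tree rank at most $3+(n-6)=n-3$ for every $n>6$.

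The genuine gap is that you give no approach to the $n=6$ base case, which is the entire content of the theorem. ``Verify by giving explicit decompositions'' is not a plan: a $6\times 6$ dissimilarity matrix has $15$ free real entries and no evident structure telling you which three tree matrices to write down, nor any reason a priori why three should suffice. The paper's key idea, which your proposal lacks, is to use the tropical Pfaffian---the $15$-term tropical polynomial whose monomials are indexed by the perfect matchings of $[6]$. After relabeling so that $x_{12}x_{34}x_{56}$ achieves the minimum, the resulting inequalities (for instance $M_{12}+M_{34}\leq M_{13}+M_{24}$ and $M_{12}+M_{34}\leq M_{14}+M_{23}$, and their analogues for the other two pairs) are precisely what is needed to write $M$ as the tropical sum of three tree matrices, each living on a $4$-element principal submatrix ($\{1,2,3,4\}$, $\{1,2,5,6\}$, $\{3,4,5,6\}$) with a single overflow entry $X_i$ that the minimality guarantees is at least the corresponding entry of~$M$. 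Without this matching device there is no mechanism in your proposal for selecting a decomposition, so the argument does not close.
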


\begin{proof}
Let $M$ be a $6 \times 6$ dissimilarity matrix.
Consider the tropical polynomial whose terms correspond to
the perfect matchings on $6$ vertices:
\begin{gather*}
x_{12}x_{34} x_{56} \oplus x_{12}x_{35}x_{46} \oplus x_{12}x_{36}x_{45}
\oplus x_{13}x_{24}x_{56} \oplus x_{13}x_{25}x_{46} \\
{} \oplus x_{13}x_{26}x_{45}
\oplus x_{14}x_{23}x_{56} \oplus x_{14}x_{25}x_{36} \oplus x_{14}x_{26}x_{35}
\oplus x_{15}x_{23}x_{46} \\
{} \oplus x_{15}x_{24}x_{36} \oplus x_{15}x_{26}x_{34} \oplus x_{16}x_{23}x_{45}
\oplus x_{16}x_{24}x_{35} \oplus x_{16}x_{25}x_{34}.
\end{gather*}
Note that this is the tropicalization of the Pfaffian of a $6\times 6$
dissimilarity matrix (see, for example~\cite[Ch.~7]{godsil}).
After relabeling the vertices, we can assume
that the minimum is achieved by the term $x_{12} x_{34} x_{56}$. In
particular, this means that for the $4$-point
condition applied to the upper left $4\times 4$ matrix, the minimum is achieved
by $x_{12} x_{34}$. We can set $X_1$ equal to the smaller of $M_{13} +
M_{24} - M_{12}$ and $M_{14} + M_{23} - M_{12}$, so that the matrix
\begin{equation*}
N = \begin{bmatrix}
* & M_{12} & M_{13} & M_{14} & \infty & \infty \\
M_{12} & * & M_{23} & M_{24} & \infty & \infty \\
M_{13} & M_{23} & * & X_1 & \infty & \infty \\
M_{14} & M_{24} & X_1 & * & \infty & \infty \\
\infty & \infty & \infty & \infty & * & \infty \\
\infty & \infty & \infty & \infty & \infty & *
\end{bmatrix}
\end{equation*}
is a tree matrix. Moreover, by our assumption, we have that $X_1 \geq M_{34}$.
By similar logic, there exist $X_2$ and $X_3$ such that the
following is an expression of~$M$ as the tropical sum of $3$ tree matrices:
\begin{equation*}
N \oplus
\begin{bmatrix}
* & X_2 & \infty & \infty & M_{15} & M_{16} \\
X_2 & * & \infty & \infty & M_{25} & M_{26} \\
\infty & \infty & * & \infty & \infty & \infty \\
\infty & \infty & \infty & * & \infty & \infty \\
M_{15} & M_{25} & \infty & \infty & * & M_{56} \\
M_{16} & M_{26} & \infty & \infty & M_{56} & *
\end{bmatrix} \\
\oplus 
\begin{bmatrix}
* & \infty & \infty & \infty & \infty & \infty \\
\infty & * & \infty & \infty & \infty & \infty \\
\infty & \infty & * & M_{34} & M_{35} & M_{36} \\
\infty & \infty & M_{34} & * & M_{45} & M_{46} \\
\infty & \infty & M_{35} & M_{45} & * & X_3 \\
\infty & \infty & M_{36} & M_{46} & X_3 & * \\
\end{bmatrix}
\end{equation*}
Therefore, $M$ has tree rank at most~$3$.

For $n > 6$, the theorem follows from Lemma~\ref{lem:tree-rank-submatrix}.
\end{proof}

\begin{table}
\begin{tabular}{|l|l|l|}
\hline
n & maximum tree rank & example \\
\hline
$3$ & $1$ & \\
$4$ & $2$ & \\
$5$ & $3$ & $0/1$ matrix corresponding to 5-cycle \\
$6$ & $3$ & \\
$7$ & $4$ & \\
$8$ & $5$ & \\
$9$ & $6$ & $M$ in~(\ref{eqn:tree-rank-6}) \\
$10$ & $6$ or $7$ & Any extension of $M$ in~(\ref{eqn:tree-rank-6})\\
$9k$ & between $6k$ and $9k-3$ & $M_k$ from discussion
following~(\ref{eqn:tree-rank-6}) \\
\hline
\end{tabular}
\caption{Maximum possible tree rank of an $n\times n$ dissimilarity matrix, to
the best of our knowledge. The
upper bounds come from Theorems~\ref{thm:max-star-tree-rank} 
and~\ref{thm:tree-rank-n-3}. The examples have the largest tree ranks that
are known to us. The omitted examples can be provided by taking a principal
submatrix of a larger example, by Lemma~\ref{lem:tree-rank-submatrix}.}
\label{tbl:tree-rank}
\end{table}
Beginning with $n =10$, we don't know whether or not the bound  in
Theorem~\ref{thm:tree-rank-n-3} is
sharp. For the following $9 \times 9$ matrix, found by random search, the
deficiency graph was computed to have chromatic number $6$:
\begin{equation} \label{eqn:tree-rank-6}
M =  \begin{bmatrix}
* & 1 & 6 & 7 & 2 & 3 & 8 & 9 & 6 \\
1 & * & 2 & 7 & 9 & 7 & 5 & 7 & 1 \\
6 & 2 & * & 6 & 0 & 6 & 1 & 7 & 1 \\
7 & 7 & 6 & * & 3 & 3 & 8 & 5 & 3 \\
2 & 9 & 0 & 3 & * & 5 & 7 & 5 & 7 \\
3 & 7 & 6 & 3 & 5 & * & 9 & 3 & 9 \\
8 & 5 & 1 & 8 & 7 & 9 & * & 2 & 3 \\
9 & 7 & 7 & 5 & 5 & 3 & 2 & * & 8 \\
6 & 1 & 1 & 3 & 7 & 9 & 3 & 8 & * \\
\end{bmatrix}
\end{equation}
Together with Theorem~\ref{thm:tree-rank-n-3}, this computation shows that $M$
has tree rank~$6$.
For any $k \geq 1$,
we can form an $9k\times 9k$ matrix $M_k$ by putting $M$ in blocks along the
diagonal and setting all other entries to $10$. The deficiency graph of
$M_k$ includes $k$ copies of the deficiency graph of $M$, and all edges between
distinct copies. Therefore, the chromatic number, and thus the tree rank, are at
least $6k$.

On the other hand, in order to provide examples of an $n \times n$ matrix with
tree rank $n-3$ for all $n \leq 9$, we have the following lemma.
\begin{lemma} \label{lem:tree-rank-submatrix}
Let $M$ be an $n\times n$ matrix. If any $(n-m) \times (n-m)$ principal
submatrix has tree rank $r$, then $M$ has tree rank at most $r +m$.
\end{lemma}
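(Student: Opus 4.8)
The plan is to show directly that a minimal tree-rank decomposition of an $(n-m)\times(n-m)$ principal submatrix of $M$ can be extended to a decomposition of all of $M$ using only $m$ additional tree matrices. After permuting the rows and columns of $M$, I may assume that the principal submatrix of tree rank $r$ is the upper left $(n-m)\times(n-m)$ submatrix $M'$, and that the extra indices are $n-m+1, \ldots, n$.

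First I would write $M' = T_1' \oplus \cdots \oplus T_r'$ as a tropical sum of $r$ tree matrices. Using Lemma~\ref{lem:extn-tree-rank-1}, I extend each $T_i'$ to an $n\times n$ tree matrix $T_i$ whose upper left $(n-m)\times(n-m)$ block is $T_i'$ and all of whose remaining entries exceed some large constant $C$. Then $T_1 \oplus \cdots \oplus T_r$ already agrees with $M$ on the entire upper left block, and its other entries are arbitrarily large, hence at least as big as the corresponding entries of $M$.

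Next I account for the remaining $2n - m\cdot? $ entries---more precisely, for each $k$ with $n-m < k \leq n$ I need a tree matrix that realizes the $k$th row and column of $M$ (the entries $M_{ik}$ for $i \neq k$) and has all other entries large. Such a matrix is easy to produce: take the star tree with center $k$ and a leaf edge of appropriate length to each $i \neq k$, chosen so that the leaf-to-leaf distances are large while the distances $M_{ik}$ are reproduced exactly. Concretely, set the pendant length of leaf $i$ to a large value $D_i$ and the pendant length of leaf $k$ to $M_{ik} - D_i$ adjusted so it is consistent; more simply, one can use the construction of Lemma~\ref{lem:rank-1-extn}/Lemma~\ref{lem:star-tree-extn} applied to a $2\times 2$ (or $1\times 1$) base matrix and invoke that star tree matrices are tree matrices. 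Taking the tropical sum of $T_1 \oplus \cdots \oplus T_r$ with these $m$ star tree matrices $S_{n-m+1}, \ldots, S_n$ then yields $M$: on the upper left block the $T_i$ already give the right values and everything else is large; in row and column $k$ the matrix $S_k$ gives exactly $M_{ik}$ for $i$ in the old block, and for the entries $M_{k\ell}$ with both $k,\ell$ among the new indices we just need at least one of $S_k, S_\ell$ to hit it---so we should instead realize each new row and column sequentially, at step $k$ building a star tree that correctly reproduces $M_{ik}$ for all $i < k$ (both old and previously-added new indices) while keeping the yet-unprocessed entries large. This gives the bound $r + m$.

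The main obstacle is the bookkeeping in the last paragraph: making sure that when the new rows and columns are added one at a time, each new star tree matrix reproduces the already-determined entries $M_{ik}$ for $i<k$ exactly while leaving the entries $M_{k\ell}$ for $\ell > k$ strictly larger, so that a later step can fix them. This is exactly the same sequential-extension bookkeeping used in the proof of Theorem~\ref{thm:max-star-tree-rank}, where the vector $w$ with $w_i = M_{in} + C$ and $w_n = -C$ does the job; here one uses $m$ such vectors in turn, and the constant $C$ at each stage is chosen large relative to all previously fixed entries. Once this is set up, verifying that the tropical sum equals $M$ is routine.
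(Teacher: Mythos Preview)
Your proposal is correct and follows essentially the same approach as the paper: extend a tree-rank decomposition of the principal submatrix via Lemma~\ref{lem:extn-tree-rank-1}, then add one star tree matrix for each of the $m$ new indices.

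The one place you make life harder than necessary is the ``sequential bookkeeping'' you flag as the main obstacle. In the paper's proof, no sequential processing is needed: for each new index $i$, one simply takes the vector $v_i$ with $(v_i)_j = C + M_{ij}$ for \emph{every} $j \neq i$ (including the other new indices) and $(v_i)_i = -C$. Then $\pi(v_i^T \odot v_i)$ has $(i,j)$ entry exactly $M_{ij}$ for all $j$, and all other entries at least $2C$ plus something bounded. In particular, the entry $M_{k\ell}$ with $k,\ell$ both new is already hit exactly by both $\pi(v_k^T\odot v_k)$ and $\pi(v_\ell^T\odot v_\ell)$, so there is nothing to adjust later and a single large $C$ works for all $m$ star trees simultaneously. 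Your sequential version also works, but the obstacle you identify is an artifact of not letting each $S_k$ realize the full $k$th row and column at once.
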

\begin{proof}
Fix a decomposition of the $(n-m) \times (n-m)$ principal submatrix into $r$
tree matrices. We can extend each tree matrix to an $n\times n$ tree matrix by
Lemma~\ref{lem:extn-tree-rank-1}. For each index~$i$ not in the principal
submatrix, define $v_i$ to be the vector which is $C+M_{ij}$ in the $j$th entry
and $-C$ in the
$i$th entry, where $C$ is a large real number. Then, the extended tree matrices,
together with $\pi(v_i^T \odot v_i)$ for all $i$ not in the principal
submatrix, give a decomposition of $M$ into $r+m$ tree matrices, as
desired.
\end{proof}

These results on the maximum tree rank are summarized in
Table~\ref{tbl:tree-rank}.


\section{Symmetric Barvinok rank for \texorpdfstring{$n = 3$}{n=3}}
\label{sec:sym-barv-3}

In this section, we explicitly describe the stratification of 
$3\times 3$ symmetric matrices by symmetric Barvinok rank. By
Theorem~\ref{thm:max-rank}, the rank is at most~$3$, and the locus of rank~$1$
is the tropical variety defined by the $2\times 2$ minors, so it suffices to
characterize the matrices of rank at most~$2$.

Following~\cite{dss}, we call a square matrix \emph{tropically singular} if it
lies in the tropical variety of the determinant.
\begin{prop} \label{prop:sym-3}
Let $M$ be a symmetric $3\times 3$ matrix. Then the following are equivalent:
\begin{enumerate}
\item $M$ has symmetric Barvinok rank at most $2$;
\item The deficiency graph of $M$ is $2$-colorable;
\item $M$ is tropically singular and 
$M_{ii} + M_{jj} \leq 2 M_{ij}$ for all $1 \leq i, j \leq 3$.
\end{enumerate}
\end{prop}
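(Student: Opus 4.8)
The plan is to prove the equivalences by showing $(1)\Rightarrow(2)\Rightarrow(3)\Rightarrow(1)$, so that all three conditions are tied together in a cycle. The implication $(1)\Rightarrow(2)$ is essentially free: Proposition~\ref{prop:deficiency-hypergraph} (with the quadratic tropical basis of $2\times 2$ minors) says that the chromatic number of the deficiency graph is a lower bound on symmetric Barvinok rank, so if the rank is at most $2$ then the deficiency graph is $2$-colorable. For $(2)\Rightarrow(3)$, I would argue contrapositively. If $M$ violates $M_{ii}+M_{jj}\le 2M_{ij}$ for some $i,j$, then by Proposition~\ref{prop:inf-rank} the deficiency graph has a loop at node $ij$, hence is not $2$-colorable (indeed not colorable at all). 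If instead $M$ is not tropically singular, then the minimum in the tropical determinant $\bigoplus_\sigma x_{1\sigma(1)}x_{2\sigma(2)}x_{3\sigma(3)}$ is attained uniquely; I would expand the determinant's three off-diagonal monomials and the one diagonal monomial and check, by a short case analysis on which term is the unique minimum, that the induced edges of the deficiency graph contain an odd cycle (a triangle or a triangle-plus-loop configuration) on the three nodes $12,13,23$, forcing $\chi\ge 3$.

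The real content is $(3)\Rightarrow(1)$: assuming $M$ is tropically singular with $M_{ii}+M_{jj}\le 2M_{ij}$ for all $i,j$, produce a decomposition of $M$ into two symmetric rank~$1$ matrices. By the normalization used in the proof of Theorem~\ref{thm:max-rank}, I may subtract $M_{ii}/2$ from row and column $i$ and assume all diagonal entries are $0$, so all off-diagonal entries are nonnegative and tropical singularity becomes the statement that the minimum of $M_{12}+M_{13}+M_{23}$ (the three $2$-term products coming from the transpositions, each of the form $M_{ij}+M_{ik}$ applied at... more precisely the determinant's expansion) is achieved at least twice; after the diagonal normalization the determinant's monomials are $x_{12}x_{13}x_{23}$ (twice, from the two $3$-cycles) and the three terms $x_{ii}x_{jk}^2$ which have value $2M_{jk}$. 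So tropical singularity reduces to: $2(M_{12}+M_{13}+M_{23})$ ... I would sort the three quantities $2M_{12}, 2M_{13}, 2M_{23}$ and $M_{12}+M_{13}+M_{23}$ and observe singularity forces the minimum of $\{2M_{12},2M_{13},2M_{23}\}$ to equal $M_{12}+M_{13}+M_{23}$, i.e.\ the two smallest of $M_{12},M_{13},M_{23}$ are equal and are $\le$ the third. WLOG $M_{13}=M_{23}=:a \le M_{12}=:b$. Then I would exhibit the explicit decomposition, e.g.
\[
M=\begin{bmatrix} 0 & b & a\\ b & 2b & b+a\\ a & b+a & 2a\end{bmatrix}
\oplus
\begin{bmatrix} 2a & a & a \\ a & 0 & a \\ a & a & 0 \end{bmatrix},
\]
and verify coordinatewise that the minimum is correct: the first summand is $v^T\odot v$ with $v=(0,b,a)$, the second is $w^T\odot w$ with $w=(a,0,0)$, and one checks entrywise that $\min$ of the two gives back $M$ using $a\le b$ (and hence $a\le b+a$, $2a\le a+b$, etc.). One then undoes the diagonal normalization, which preserves rank.

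I expect $(3)\Rightarrow(1)$ to be the main obstacle, and within it the bookkeeping of exactly which inequalities tropical singularity yields after the diagonal normalization — getting the right "two of the three off-diagonal entries are equal and minimal" statement and then matching it to a rank~$1$-plus-rank~$1$ pattern. Once the correct normal form is identified the verification is a finite check of a handful of $\min$ comparisons. A possible subtlety is that the natural $2$-term Pfaffian-type minors used elsewhere in the paper are degree~$2$, whereas the determinant here is degree~$3$; I should be careful to phrase tropical singularity in terms of the actual $3\times 3$ tropical determinant and confirm that, combined with the diagonal-domination hypothesis, it is exactly the condition needed — the diagonal-domination hypothesis is what rules out the degenerate singular matrices (those with a loop in the deficiency graph) that have infinite rather than rank-$\le 2$.
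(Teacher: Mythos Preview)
Your overall cycle $(1)\Rightarrow(2)\Rightarrow(3)\Rightarrow(1)$ matches the paper's, and the normalization to zero diagonal is exactly right. But there is a genuine gap in both $(2)\Rightarrow(3)$ and $(3)\Rightarrow(1)$, coming from the same oversight: you have dropped the diagonal term $x_{11}x_{22}x_{33}$ from the $3\times 3$ tropical determinant. After normalizing so that $M_{ii}=0$, this term evaluates to~$0$, and since the hypothesis $M_{ii}+M_{jj}\le 2M_{ij}$ forces every other permutation term to be~$\ge 0$, the identity term is \emph{always} the minimum. Tropical singularity therefore says precisely that some other term also equals~$0$, i.e.\ that some off-diagonal entry $M_{ij}$ vanishes. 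Your claimed consequence, that the two smallest off-diagonal entries coincide, is neither necessary nor sufficient: for instance $M_{12}=0$, $M_{13}=1$, $M_{23}=2$ is tropically singular but has three distinct off-diagonal entries, while $M_{12}=M_{13}=1$, $M_{23}=2$ satisfies your condition yet is tropically nonsingular.

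This error propagates into your explicit decomposition. The second summand you write down is not of the form $w^T\odot w$: with $w=(a,0,0)$ the $(2,3)$ entry is $0$, not~$a$, and then the coordinatewise minimum at position $(2,3)$ is $\min(a+b,0)=0\neq a$ whenever $a>0$. The decomposition only works when $a=0$, which is exactly the paper's normal form. The paper's argument is: after normalization, singularity gives some $M_{ij}=0$, say $M_{12}=0$, and then one writes $M$ as the sum of a rank~$1$ matrix supported on indices $\{1,2\}$ and a rank~$1$ matrix $(M_{13},M_{23},0)^T\odot(M_{13},M_{23},0)$. For $(2)\Rightarrow(3)$, the same oversight leads you to look for an odd cycle on the off-diagonal nodes $\{12,13,23\}$; in fact those three edges cannot all be present (adding the three strict inequalities gives $0<0$). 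The relevant triangle is on the \emph{diagonal} nodes $\{11,22,33\}$: if all three inequalities $M_{ii}+M_{jj}<2M_{ij}$ are strict, the minors $x_{ii}x_{jj}\oplus x_{ij}^2$ each contribute an edge, and $2$-colorability forces at least one equality $M_{ii}+M_{jj}=2M_{ij}$, which immediately yields tropical singularity.
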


\begin{proof}
First we note that $M_{ii} + M_{jj} \leq 2M_{ij}$ implies that every term of the
tropical determinant is greater than or equal to $M_{11} + M_{22} + M_{33}$.
Thus, a matrix~$M$ satisfying these inequalities
is tropically singular if and only if some other term of the tropical
determinant equals $M_{11} + M_{22} + M_{33}$.

Proposition~\ref{prop:deficiency-hypergraph} shows that (1) implies~(2).

If the deficiency
graph is $2$-colorable, then it can't have any loops, so $M_{ii} + M_{jj} \leq
2 M_{ij}$.  Furthermore, the three diagonal entries can't form a clique, so
without loss of generality, we assume that there is no edge between $11$ and
$22$.  Together with the inequality, this implies that $M_{11} + M_{22}
= 2M_{12}$, so $M_{11}+M_{22} + M_{33} = 2M_{12} + M_{33}$, so by our initial
remark, $M$ is tropically singular. Therefore (2) implies~(3).

Finally, suppose that $M$ satisfies~(3). We
can subtract $M_{ii}/2$ from the $i$th row and $i$th column without changing
the rank, and so we assume that every diagonal entry is~$0$.  The inequalities
then say that all of the off-diagonal entries are non-negative.  For the minimum
in the tropical determinant to be achieved at least twice, we must have at least
one off-diagonal entry equal to~$0$.  Without loss of generality, we assume that
$M_{12} = 0$. Then,
\begin{equation*}
M= 
\begin{bmatrix}
0 & 0 & M_{13} \\
0 & 0 & M_{23} \\
M_{13} & M_{23} & 0
\end{bmatrix} =
\begin{bmatrix}
0 & 0 & \infty \\
0 & 0 & \infty \\
\infty & \infty & \infty
\end{bmatrix}
\oplus
\begin{bmatrix}
2M_{13} & M_{13} + M_{23} & M_{13} \\
M_{13} + M_{23} & 2 M_{23} & M_{23} \\
M_{13} & M_{23} & 0
\end{bmatrix},
\end{equation*}
where $\infty$ is as in Remark~\ref{rmk:rank-1-extn}.
Therefore, $M$ has symmetric Barvinok rank at most~$2$.
\end{proof}

\begin{remark}
For larger matrices the symmetric Barvinok rank does not have as simple a
characterization as the third condition in Proposition~\ref{prop:sym-3}.
A necessary condition for a symmetric $n\times n$ matrix to have rank at
most~$r$ is that $M_{ii} + M_{jj} \leq 2M_{ij}$ and all the $(r+1) \times (r+1)$
submatrices are tropically singular, but this condition is not sufficient.
For $n \geq 5$ and $r = n$, 
there are $n\times n$ symmetric matrices with finite rank greater
than $n$ by
Remark~\ref{rmk:bipartite_graph}. Even for $n=4$ and for $r=2$
and $r=3$, the matrix
\begin{equation*}
M =
\begin{bmatrix}
0 & 0 & 1 & 2 \\
0 & 0 & 2 & 1 \\
1 & 2 & 0 & 0 \\
2 & 1 & 0 & 0
\end{bmatrix},
\end{equation*}
has symmetric Barvinok rank~$4$, as the nodes $12$, $13$, $24$, and~$34$ form a
$4$-clique in its deficiency graph. However, $M$ and all of its $3\times 3$
submatrices are tropically singular.
\end{remark}

\section{Star tree rank for \texorpdfstring{$n=5$}{n=5}}\label{sec:star-tree-5}

In this section, we give an explicit characterization of the secant set of the
space of star trees in the case $n=5$. We do the same for the Grassmannian in
the next section.

From Theorem~\ref{thm:max-star-tree-rank}, we know that the maximum star tree
rank of a $5 \times 5$ matrix is $3$.  On the other hand, the set of
dissimilarity matrices of star tree
rank~$1$ are defined by the $2\times 2$ minors. Thus, our task is to describe
the second secant set of the space of star trees,
i.e.\ the set of dissimilarity matrices of star tree rank~$2$.

First, we recall the defining ideal of the classical secant variety. The space
of star trees is the tropicalization of the projection of the rank~$1$ symmetric
matrices onto their off-diagonal entries. Its second secant variety is a
hypersurface in $\CC^{10}$ defined by the following $12$-term quintic 
polynomial, known as the pentad~\cite{drton}:
\begin{gather*}
x_{12}x_{13}x_{24}x_{35}x_{45}
-x_{12}x_{13}x_{25}x_{34}x_{45}
-x_{12}x_{14}x_{23}x_{35}x_{45}
{}+x_{12}x_{14}x_{25}x_{34}x_{35} \\
+x_{12}x_{15}x_{23}x_{34}x_{45}
-x_{12}x_{15}x_{24}x_{34}x_{35}
+x_{13}x_{14}x_{23}x_{25}x_{45}
-x_{13}x_{14}x_{24}x_{25}x_{35} \\
{}-x_{13}x_{15}x_{23}x_{24}x_{45}
+x_{13}x_{15}x_{24}x_{25}x_{34}
-x_{14}x_{15}x_{23}x_{25}x_{34}
+x_{14}x_{15}x_{23}x_{24}x_{35}
\end{gather*}
Note that the $12$ terms of the pentad correspond to the $12$
different cycles on 5 vertices.  The second secant set of the space of star
trees is contained in the tropicalization of the pentad, but the containment is
proper. Nonetheless, the terms of the pentad play a fundamental role in
characterizing matrices of rank at most~$2$.

\begin{thm}
Let $M$ be a $5\times 5$ dissimilarity matrix. The following are equivalent:
\begin{enumerate}
\item $M$ has star tree rank at most~$2$;
\item The deficiency graph of $M$ is $2$-colorable;
\item The minimum of the terms of the pentad is achieved at two terms which
satisfy the following conditions:
\begin{enumerate}
\item The terms differ by a transposition;
\item Assuming, without loss of generality, that
the minimized terms are 
$x_{12}x_{23}x_{34}x_{45}x_{15}$
and
$ x_{13}x_{23}x_{24}x_{45}x_{15}$,
then we have that $M_{14}+M_{23} \leq M_{12}+M_{34} =
M_{13}+M_{24}$.
\end{enumerate}
\end{enumerate}
\end{thm}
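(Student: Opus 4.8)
The plan is to prove the three equivalences in the cyclic order $(1)\Rightarrow(2)\Rightarrow(3)\Rightarrow(1)$. The implication $(1)\Rightarrow(2)$ is immediate from Proposition~\ref{prop:deficiency-hypergraph}: star tree rank at most $2$ forces the chromatic number of the deficiency graph to be at most $2$, i.e.\ $2$-colorable. The organizing tool for the other two implications is a dictionary between the deficiency graph $D$ and the Petersen graph $P=K(5,2)$, whose vertices are the pairs $\{a,b\}\subset[5]$ and whose edges join disjoint pairs. Two disjoint pairs $\{a,b\}$, $\{c,d\}$ span one of the three matchings of the $4$-set $T=\{a,b,c,d\}$, and unwinding the tropical basis~(\ref{eqn:basis-star-tree}) shows that the corresponding edge of $P$ lies in $D$ precisely when $M_{ab}+M_{cd}$ is strictly smaller than the weight of at least one of the other two matchings of $T$; thus $D$ is always a subgraph of $P$, and the edge of $P$ attached to $T$'s matching of weakly maximal weight is always absent. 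Moreover the $12$ five-cycles of $P$ are exactly the $12$ terms of the pentad: the term indexed by a $5$-cycle $C$ of $K_5$ has as its variables the five edges of $C$, and two of those edges are $P$-adjacent iff they are non-consecutive on $C$, which realizes $C$ again as a pentagon in $P$. Since $P$ has odd girth $5$, $2$-colorability of $D$ in particular rules out every pentad-pentagon; conversely, for the argument below it suffices that any failure of $2$-colorability exhibits an odd cycle in $D$.

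For $(2)\Rightarrow(3)$ I would argue by contraposition. If the minimum of the pentad were attained at a unique term $\gamma$, then for each of the five $4$-sets $T=[5]\setminus\{v\}$ one reroutes $\gamma$ across $T$ (keeping the two edges of $\gamma$ at $v$) to get another pentad term whose value strictly exceeds $t_\gamma$; this shows the $\gamma$-matching of $T$ is not maximal, so by the dictionary every edge of the pentagon $\gamma$ lies in $D$, contradicting bipartiteness. Hence the minimum is attained at least twice. The remaining obstructions to $(3)$ — that no two minimizers differ by a transposition, or that two of them do (say, after relabelling, $x_{12}x_{23}x_{34}x_{45}x_{15}$ and $x_{13}x_{23}x_{24}x_{45}x_{15}$, whose equality already forces $M_{12}+M_{34}=M_{13}+M_{24}$) but with $M_{14}+M_{23}>M_{12}+M_{34}$ — must likewise be shown to produce an odd cycle in $D$; in the last case one starts from the two edges $\{12\}\!-\!\{34\}$ and $\{13\}\!-\!\{24\}$, which are present because both matchings $\{12|34\}$ and $\{13|24\}$ are beaten in weight by $\{14|23\}$. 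This case analysis over the five $4$-sets, with careful bookkeeping of weak versus strict inequalities, is the technical heart of the theorem and the step I expect to be the main obstacle.

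For $(3)\Rightarrow(1)$ I would write down an explicit decomposition $M=\pi(a^{T}\odot a)\oplus\pi(b^{T}\odot b)$. With the normalization of $(3)$, so that $M_{14}+M_{23}\le M_{12}+M_{34}=M_{13}+M_{24}$ and the two minimizing pentad terms are the ones above, pick $a\in\RR^{5}$ forcing $a_i+a_j=M_{ij}$ on the five entries $\{15,25,35,45,14\}$ — a system that is consistent for every $M$ and determines $a$ — and pick $b_1,b_2,b_3,b_4$ forcing $b_i+b_j=M_{ij}$ on $\{12,13,23,24,34\}$, where consistency of the five equations is exactly the equality $M_{12}+M_{34}=M_{13}+M_{24}$, leaving $b_5$ free and taking it very large. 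One then verifies $M=\pi(a^{T}\odot a)\oplus\pi(b^{T}\odot b)$ entry by entry by showing the star tree matrix that does not meet $M$ at that entry is at least $M$ there: on the entries where $a$ meets $M$, the matrix $\pi(b^{T}\odot b)$ dominates because $b_5$ is large, the only non-automatic case being $b_1+b_4\ge M_{14}$, which is precisely $M_{12}+M_{34}\ge M_{14}+M_{23}$; and on the entries where $b$ meets $M$, each inequality $a_i+a_j\ge M_{ij}$ unwinds, after substituting the formula for $a$, into a comparison $t_C\ge t_{\gamma_1}$ or $t_C\ge t_{\gamma_2}$ for an explicit pentad cycle $C$, which holds because $\gamma_1,\gamma_2$ attain the minimum of the pentad. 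So $(3)$ supplies exactly the inequalities this construction needs, and $(3)\Rightarrow(1)$ is a direct, if lengthy, verification; the genuine difficulty is concentrated in $(2)\Rightarrow(3)$.
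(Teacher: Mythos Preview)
Your approach matches the paper's in every essential respect. The implication $(1)\Rightarrow(2)$ is identical, and your decomposition for $(3)\Rightarrow(1)$ is literally the paper's: your $\pi(b^{T}\odot b)$ is the paper's first matrix (with $b_1+b_4=A-M_{23}$ and $b_5$ large playing the role of the $\infty$'s), and your $\pi(a^{T}\odot a)$ is the paper's second matrix; the six inequalities you describe unwinding to pentad comparisons are exactly the six displayed inequalities the paper verifies.

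The only place you diverge is in expectation, not in substance. For $(2)\Rightarrow(3)$ you split the contrapositive into three subcases (unique minimizer; no two minimizers related by a transposition; a transposition-related pair with $(3\mathrm{b})$ failing) and flag the latter two as ``the main obstacle.'' In the paper there is no case split: one simply fixes \emph{any} minimal pentagon $\gamma$ and shows that all five Petersen-edges along $\gamma$ lie in the deficiency graph. For a non-adjacent pair $e,f$ of edges of $\gamma$, the transposition-adjacent pentagon $\gamma'$ satisfies $t_{\gamma'}\ge t_\gamma$, i.e.\ $M_e+M_f\le M_{e'}+M_{f'}$; if the inequality is strict we get the deficiency edge $e\!-\!f$ directly, and if equality holds then $(\gamma,\gamma')$ is a pair of minimizers differing by a transposition, so since $(3)$ fails the third matching must have strictly larger weight, again giving the edge $e\!-\!f$. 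This is the same mechanism as your ``unique minimum'' argument, applied edge-by-edge rather than globally, and it disposes of all your subcases at once. So the step you expected to be the hardest is in fact no harder than the case you already sketched.
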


\begin{proof}
(1) implies (2) by Proposition~\ref{prop:deficiency-hypergraph}.

We show that (2) implies (3) by proving the contrapositive.  Suppose that $M$
doesn't satisfy the two conditions for any pair of terms. Without loss of
generality, we assume that $x_{12}x_{23}x_{34}x_{45}x_{15}$ is a minimal term
from the pentad. By minimality, $M_{12} + M_{34}$ is less than or equal to
$M_{13} + M_{24}$.  If this inequality is strict, then we have an edge between
$12$ and~$34$ in the deficiency graph.  On the other hand, if it is an equality,
then by our assumption that the conditions in~(3) don't hold, $M_{12} + M_{34}$
must be less than $M_{14} + M_{23}$, in which case we also have an edge between
$12$ and~$34$.  Similarly, we have edges between $34$ and~$15$, between $15$
and~$23$, between $23$ and~$45$, and between $45$ and~$12$. Thus, the graph has
a $5$-cycle, and so is not $2$-colorable.  Therefore, $M$ has star tree
rank~$3$.

Finally, suppose that the two conditions in~(3) hold.
Let $A = M_{12}+M_{34} =
M_{13}+M_{24}$. Then we claim that:
\begin{multline*}
M =
\begin{bmatrix}
*  & M_{12} & M_{13} & A - M_{23} & \infty \\
M_{12} & * & M_{23} & M_{24} & \infty \\
M_{13} & M_{23} & * & M_{34} & \infty \\
A - M_{23} & M_{24} & M_{34} & * & \infty \\
\infty & \infty & \infty & \infty & *
\end{bmatrix}
\oplus \\
\def\+{\!\!+\!\!}\def\-{\!\!-\!\!}
\begin{bmatrix}
* & M_{14} \+ M_{25} \- M_{45} & M_{14} \+ M_{35} \- M_{45} & M_{14} & M_{15} \\
M_{14} \+ M_{25} \- M_{45} & * & B & M_{14} \+ M_{25} \- M_{15} & M_{25} \\
M_{14} \+ M_{35} \- M_{45} & B & * & M_{14} \+ M_{35} \- M_{15} & M_{35} \\
M_{14} & M_{14} \+ M_{25} \- M_{15} & M_{14} \+ M_{35} \- M_{15} & * & M_{45} \\
M_{15} & M_{25} & M_{35} & M_{45} & *
\end{bmatrix},
\end{multline*}
where $\infty$ is as in Lemma~\ref{lem:star-tree-extn} and $B
= M_{14} + M_{25} + M_{35} - M_{15} - M_{45}$. Note that each matrix
has some entries taken from~$M$, while the rest are forced by the rank~$1$
condition.
We just need to check
that the minimum of these two matrices is in fact~$M$. To see this, we have the
following inequalities from condition~(3):
\begin{align*}
M_{14}+M_{23} \leq M_{12}+M_{34} &\quad\Rightarrow\quad M_{14} \leq A - M_{23}
\\
M_{12}+M_{23}+M_{34}+M_{45}+M_{15} &\,\leq\,
M_{12}+M_{23}+M_{35}+M_{45}+M_{14} \\
&\quad\Rightarrow\quad M_{34} \leq M_{14} + M_{35} - M_{15} \\
M_{13}+M_{23}+M_{24}+M_{45}+M_{15} &\,\leq\, M_{13}+M_{23}+M_{25}+M_{45}+M_{14}
\\
&\quad\Rightarrow\quad M_{24} \leq M_{14} + M_{25} - M_{15} \\
M_{12} + M_{23} + M_{34} + M_{45} + M_{15} &\,\leq\,
M_{14} + M_{34} + M_{23} + M_{25} + M_{15} \\
&\quad\Rightarrow\quad M_{12}  \leq M_{14} + M_{25} - M_{45} \\
M_{13} + M_{23} + M_{24}  + M_{45} + M_{15} &\,\leq\,
M_{14} + M_{24} + M_{23} + M_{35} + M_{15} \\
&\quad\Rightarrow\quad M_{13} \leq M_{14} + M_{35} - M_{45} \\
M_{12}+M_{23}+M_{34}+M_{45}+M_{15} &\,\leq\,
M_{12}+M_{25}+M_{35}+M_{34}+M_{14} \\
&\quad\Rightarrow\quad M_{23} \leq B
\end{align*}
Therefore, $M$ has star tree rank at most~$2$.
\end{proof}

\section{Tree rank for \texorpdfstring{$n=5$}{n=5}}\label{sec:tree-5}

We now turn our attention to tree rank of $5\times 5$ dissimilarity matrices.
As in the previous section,
the maximum tree rank is~$3$ and so it suffices to characterize $5 \times
5$ dissimilarity matrices of tree rank at most~$2$. Unlike the previous
section, the second classical secant variety is already all of $\mathbb C^{10}$,
so there is no classical polynomial whose tropicalization gives us a clue to the
tropical secant set. However, the tropical pentad again shows up in our
characterization.

First, here is a simple example of a $5\times 5$
$0/1$ dissimilarity matrix with tree rank $3$.
Consider the $0/1$ matrix corresponding to the $5$-cycle $C_5$. Now, $C_5$
cannot be covered by fewer than $3$ $k$-partite graphs, and so the matrix has
tree rank at least $3$ by Proposition~\ref{prop:tree-rank-01}. On the other
hand, it has tree rank at most $3$ by Theorem~\ref{thm:max-star-tree-rank} and
the inequality in~(\ref{eqn:rank-inequals}).
We will see in Remark~\ref{remark_5_cycle}
that this matrix is, in a certain sense, the only such example.

Let $P $ be the tropical polynomial in variables $\{x_{ij}:1\le i< j\le 5\} $
which is the tropical sum of the 22 tropical monomials of degree 5 in which each
$i \in\{1, \ldots ,5\}$  appears in a subscript exactly twice. Thus  $P $ has
$12$ monomials of the form $x_{12}x_{23}x_{34}x_{45}x_{15}$, forming the terms
of the pentad, and $10$ new monomials of the form $x_{12}x_{23}x_{31}x_{45} ^
2$. Let us call terms of the former kind \emph{pentagons}, and terms of the
latter kind \emph{triangles}.

\begin{theorem} \label{thm:tree-rank-5}
Let  $M $ be a $5\times 5 $ dissimilarity matrix. Then the following are
equivalent:
\begin{enumerate}
	\item $M $ has tree rank at most $2$; 
	\item  The deficiency graph is $2$-colorable;
    \item The tropical polynomial~$P$ achieves its minimum at a triangle.
\end{enumerate}
\end{theorem}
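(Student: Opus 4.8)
The plan is to prove the cycle of implications $(1)\Rightarrow(2)\Rightarrow(3)\Rightarrow(1)$. The first implication is immediate: by Proposition~\ref{prop:deficiency-hypergraph} the rank is at least the chromatic number of the deficiency graph, so tree rank at most $2$ forces $\chi\le 2$.

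For $(3)\Rightarrow(1)$ I would argue by an explicit decomposition, in the spirit of the $n=6$ computation in the proof of Theorem~\ref{thm:tree-rank-n-3}. After relabeling, assume a triangle term, say $x_{12}x_{13}x_{23}x_{45}^2$, attains the minimum of $P$, so $M_{12}+M_{13}+M_{23}+2M_{45}$ is at most the value of every other pentagon and triangle term. I would write $M=N_1\oplus N_2$, where $N_1$ is the distance matrix of a tree in which $4$ and $5$ form a cherry (recording $M_{45}$) while $1,2,3$ sit on a caterpillar far away, realizing the triangle entries $M_{12},M_{13},M_{23}$ exactly, and $N_2$ is a complementary tree matrix (with $\infty$ rows and columns as in Lemma~\ref{lem:extn-tree-rank-1}) carrying the remaining cross entries $M_{i4},M_{i5}$. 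It then remains to check that the entrywise minimum of $N_1$ and $N_2$ is $M$: for each off-diagonal coordinate one matrix agrees with $M$ there and the other must be no smaller, and each such inequality reduces to the fact that the corresponding pentagon or triangle term of $P$ dominates the distinguished triangle term. Choosing the internal edge lengths negative enough to make the $\infty$-padding legitimate finishes this step; the only work is routine bookkeeping.

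For $(2)\Rightarrow(3)$ I would prove the contrapositive: if no triangle attains $\min P$, then the deficiency graph is not $2$-colorable. First observe that this graph is a subgraph of the Petersen graph $K([5],2)$: a Pl\"ucker relation for a quadruple $\{i,j,k,l\}$ contributes at most one edge, and that edge joins two disjoint pairs. Since $\min P$ is now attained at a pentagon $P_0$, with edges $f_1,\ldots,f_5$ in cyclic order, I would look at its ``pentagram'' $5$-cycle $f_1-f_3-f_5-f_2-f_4-f_1$: deleting the vertex between $f_t$ and $f_{t+1}$, the matching $\{f_{t-1},f_{t+2}\}$ is one of the three matchings of the resulting quadruple, and these five pairs are exactly the edges of that $5$-cycle. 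Two facts pin down the situation. Summing the five matching values gives $2\,\mathrm{val}(P_0)=2\min P$. On the other hand, choosing a minimum matching in each of the five quadruples yields a $4$-regular multigraph on $[5]$ (each vertex lies in four quadruples and a matching of a $4$-set covers it), which decomposes into two $2$-regular multigraphs on $[5]$ (Petersen's $2$-factorization theorem), each of which is a term of $P$ of value $\ge\min P$; comparing totals forces the minimum matching value of each quadruple to equal the corresponding $P_0$-matching value, and forces those two $2$-regular pieces to have value exactly $\min P$ — hence, since no triangle attains the minimum, to be pentagons. In particular each matching of $P_0$ is a minimum matching of its quadruple; what is needed is that it is the \emph{unique} minimum, which follows by pairing $P_0$ against the appropriate triangle term in that quadruple — strict precisely because no triangle attains $\min P$ — unless a competing matching itself completes to another minimum pentagon.

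That last clause is the main obstacle: when $\min P$ is attained at several pentagons, the pentagram of a given one need not lie strictly inside the deficiency graph. Resolving it is the heart of the proof — one must either select, among all pentagons attaining the minimum, one whose entire pentagram consists of strict minima (an ``extremal'' choice with respect to the flip exchanging a matching for a competing minimizing one), or assemble an odd cycle in the deficiency graph out of pentagram edges coming from several minimum pentagons, exploiting the rigidity above (that the minimum matchings of the five quadruples are forced to fit together into two minimum pentagons). Everything else — the other two implications and the verification of all the inequalities — is mechanical.
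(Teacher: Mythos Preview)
Your $(1)\Rightarrow(2)$ is the paper's argument. Your $(3)\Rightarrow(1)$ is a genuinely different decomposition: with the minimal triangle written (after relabeling to match the paper) as $x_{34}x_{35}x_{45}x_{12}^2$, the paper splits the ten entries as the seven incident to $1$ or $2$ versus the three triangle entries $\{34,35,45\}$, and devotes a separate lemma (Lemma~\ref{triangle_complement}) to building the first tree matrix and checking the three nontrivial inequalities $T_{34}\ge M_{34}$, $T_{35}\ge M_{35}$, $T_{45}\ge M_{45}$. Your split (triangle $+$ doubled edge versus the six ``bipartite'' entries) may also work, but calling it routine bookkeeping undersells it: realizing six prescribed cross-distances $M_{i4},M_{i5}$ by a tree forces, via the four-point conditions on $\{i,j,4,5\}$, the values of $N_{2,ij}+N_{2,45}$, and one must then check simultaneously the four-point conditions on $\{1,2,3,4\}$ and $\{1,2,3,5\}$ and the four lower bounds $N_{2,ij}\ge M_{ij}$, $N_{2,45}\ge M_{45}$. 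This is comparable work to Lemma~\ref{triangle_complement}, not less.

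The real gap is in $(2)\Rightarrow(3)$. You correctly identify the pentagram of a minimal pentagon as the candidate $5$-cycle in the deficiency graph, but then pass through $2$-factorizations of a $4$-regular multigraph and a case analysis over multiple minimal pentagons, ending at an obstacle you explicitly do not close. The paper avoids all of this. Fix any minimal pentagon, say $x_{12}x_{23}x_{34}x_{45}x_{15}$, and compare it \emph{directly} to triangle terms, which are strictly non-minimal by hypothesis. From $P_0 < x_{14}x_{45}x_{15}x_{23}^2$ one reads off $M_{12}+M_{34}<M_{14}+M_{23}$; adding $P_0 < x_{23}x_{34}x_{24}x_{15}^2$ and $P_0 < x_{12}x_{23}x_{13}x_{45}^2$ and cancelling gives $M_{12}+M_{34}<M_{13}+M_{24}$. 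Thus $x_{12}x_{34}$ is the \emph{strict} minimum in its Pl\"ucker relation, so $12\text{--}34$ is a deficiency edge, and by the cyclic symmetry of the pentagon the entire pentagram $12\text{--}34\text{--}15\text{--}23\text{--}45\text{--}12$ lies in the deficiency graph. The strictness comes for free from the triangles being non-minimal; there is nothing to resolve about ties among pentagons.
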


\begin{proof}
First, (1) implies (2) by
Proposition~\ref{prop:deficiency-hypergraph}.

For (2) implies (3), we prove the contrapositive. Suppose the minimal terms of
$P$ are
all pentagons; without loss of generality, we assume that 
$x_{12}x_{23}x_{34}x_{45}x_{15}$ is a minimal term. Since
$x_{14}x_{45}x_{15}x_{23}^2$ is not minimal, we have $M_{12} + M_{34} < M_{14} +
M_{23}$. Similarly, we have,
\begin{align*}
M_{12} + M_{23} + M_{34} + M_{45} + M_{15} &< 2M_{15} + M_{23} + M_{34} +
M_{24}, \mbox{ and}\\
M_{12} + M_{23} + M_{34} + M_{45} + M_{15} &< 2M_{45} + M_{12} + M_{23} +
M_{13}.
\end{align*}
Adding these together and cancelling, we get $M_{12} + M_{34} < M_{13} +
M_{24}$.
Thus, $12$ and~$34$ are adjacent in the deficiency graph.  By similar reasoning,
we have adjacencies $12-34-15-23-45-12 $ in the deficiency graph, so it has a
five cycle and is not 2-colorable.  

Finally, we prove that (3) implies (1).  Assume without loss of generality that
$x_{34}x_{35}x_{45}x_{12}^ 2  $  is among the terms minimizing $P $. This
implies that $x_{12}x_{34}$, $x_{12}x_{35}$, and $x_{12}x_{45}$  are each
minimal terms in their respective Pl\"ucker equations.
Then we can use Lemmas~\ref{triangle_complement} and~\ref{triangle} below
to obtain a decomposition of $M $  into two tree matrices.
 
\begin{lemma}\label{triangle_complement}
For any  $5 \times 5$ dissimilarity matrix $M$ such that $x_{12}x_{34}$,
$x_{12}x_{35}$, and $x_{12}x_{45}$ are each minimal terms in their respective
Pl\"ucker equations, there exists some $5 \times 5$ tree
matrix $T$, such that for every $ij \in{[5] \choose 2} $, we
have $T_{ij}\ge  M_{ij} $, with equality if $ij\in\{12, 13, 14, 15, 23, 24, 25\}
$. 
\end{lemma}
\begin{proof}
If $ M$  satisfies
\begin{align*}
M_{14}+ M_{23} &\le  M_{13}+ M_{24}, \\
M_{15}+ M_{24} &\le  M_{14}+ M_{25}, \\
M_{13}+ M_{25} &\le M_{15}+ M_{23},
\end{align*}
then adding shows that each inequality must be an equality. Thus, without loss
of generality, we may assume that
\begin{align}
\label{eqn:inequalities_1}M_{14}+ M_{23} &\ge   M_{13}+ M_{24}, \\
\label{eqn:inequalities_2}M_{15}+ M_{24} &\le  M_{14}+ M_{25}, \\\
\label{eqn:inequalities_3}M_{13}+ M_{25} &\le M_{15}+ M_{23}.
\end{align}
Every other case is equivalent to this one via permutations of $\{1,2\}$ and
$\{3,4,5\}$.

Now define $T $  as follows. Let $ T_{ij}= M_{ij} $ for $ij\in\{12, 13, 14, 15,
23, 24, 25\} $, and
\begin{align*}
 T_{34} & = T_{24} + T_{ 13} - T_{12},\\
 T_{35} & = T_{25} + T_{ 13} - T_{12},\\
 T_{45} & = T_{15} + T_{ 24} - T_{12}.
\end{align*} 
That $T $  dominates $ M$  in every coordinate follows from the
inequalities~(\ref{eqn:inequalities_1}), (\ref{eqn:inequalities_2}),
and~(\ref{eqn:inequalities_3}).  To check that $T $  has tree rank 1, it
suffices to check the four-point condition on each 4-tuple. This condition is
satisfied for the 4-tuples $\{1,2,3,4\} $, $ \{1,2,3,5\}$, and $ \{1,2,4,5\}$,
by choice of $T_{34},T_{35},T_{45} $. Furthermore, we claim that
\begin{align*}
T_{15} +T_{34} &=T_{13} +T_{45} \le  T_{14} +T_{35},\textrm{ and} \\
T_{24} +T_{35} & =T_{25} +T_{34} \le  T_{23} +T_{45}.
\end{align*}
These inequalities follow immediately from substituting for
$T_{34},T_{35},T_{45} $  and using inequalities (\ref{eqn:inequalities_2})
and~(\ref{eqn:inequalities_3}).
\end{proof}

\begin{lemma}\label{triangle}
For any  $5 \times 5$ dissimilarity matrix $M$, there exists some $5 \times 5$
tree matrix $T'$ such that for every pair of indices $i$ and $j$, we have
$T'_{ij}\ge  M_{ij} $, with equality if $ij\in\{34, 35, 45\} $. 
\end{lemma}
\begin{proof}
The $\{3,4,5\}$-principal submatrix of $M$ is a tree matrix and therefore $T'$
exists with the desired properties by Lemma~\ref{lem:extn-tree-rank-1}.
\end{proof}

We can now finish the proof of Theorem~\ref{thm:tree-rank-5}.  Let $T $  be as
given in Lemma~\ref{triangle_complement} and let $T'$  be as
given in Lemma~\ref{triangle}. Then
$ M = T \oplus  T'$
and so $ M$  has tree rank at most~2.
\end{proof}

In the rest of this section, we present a detailed analysis of the
deficiency graphs~$\Delta_M$ for $n=5$. There are $5$ tropical
Pl\"ucker relations on a $5 \times 5$ matrix, each containing $3$ terms. Each
term is the tropical product of terms with disjoint entries. Thus, $\Delta_M$ is
a subgraph with at most $5$ edges of the Petersen graph~$P_{10}$, which 
is the graph on vertices ${[5] \choose 2}$ with an edge between $ij$ and $kl$ if and only if $\{i, j\}$
and $\{k, l\}$ are disjoint sets. The following theorem describes the possible
subgraphs that $\Delta_M$ can be.

\begin{figure}
\begin{center}
\input{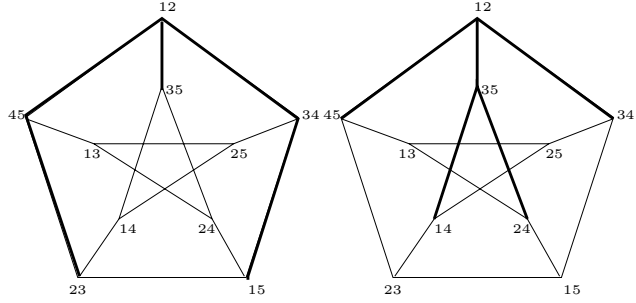}
\caption{The two 2-colorable possibilities for $\Delta_M$.}
\label{fig:Petersen-subgraphs}
\end{center}
\end{figure}
\begin{theorem}\label{thm:tree-rank-5-deficiency}
Let $M$ be a $5\times 5$ dissimilarity matrix.
Then the deficiency graph $\Delta_M$ is precisely one of the following:
\begin{enumerate}
\item The trivial graph, in which case $M$ has tree rank~$1$.
\item A non-trivial graph with fewer than $5$ edges, in which case $M$ has tree
rank~$2$.
\item Up to relabeling, either of the two graphs in
Figure~\ref{fig:Petersen-subgraphs}, in which case $M$ has tree rank~$2$.
\item A $5$-cycle, in which case $M$ has tree rank~$3$.
\end{enumerate}
\end{theorem}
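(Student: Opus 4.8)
The plan is to reduce the theorem to a purely combinatorial classification of the deficiency graphs $\Delta_M$ that can occur, reading off the tree rank from results already established. Recall from the discussion just above that $\Delta_M$ is a subgraph, with at most $5$ edges, of the Petersen graph $P_{10}$: the five Pl\"ucker relations are indexed by the $4$-subsets of $[5]$, each contributes at most one edge, and the monomial $x_{ab}x_{cd}$ occurs only in the relation of $\{a,b,c,d\}$. I would also use the following dictionary: $\Delta_M$ is edgeless exactly when $M$ lies in $G_{2,5}$, i.e.\ has tree rank~$1$; $\operatorname{tree\ rank}(M)\ge\chi(\Delta_M)$ by Proposition~\ref{prop:deficiency-hypergraph}; $\Delta_M$ being $2$-colorable forces $\operatorname{tree\ rank}(M)\le 2$ by Theorem~\ref{thm:tree-rank-5}; and $\operatorname{tree\ rank}(M)\le 3$ always, since tree rank is bounded by star tree rank by~(\ref{eqn:rank-inequals}) and star tree rank is at most $n-2=3$ by Theorem~\ref{thm:max-star-tree-rank}. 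Consequently a nonempty $2$-colorable $\Delta_M$ has tree rank exactly~$2$ and a non-$2$-colorable $\Delta_M$ has tree rank exactly~$3$, so it suffices to show that $\Delta_M$ is edgeless, a nontrivial graph with fewer than $5$ edges, one of the two $5$-edge graphs in Figure~\ref{fig:Petersen-subgraphs}, or a $5$-cycle.

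Two of the cases are immediate. If $\Delta_M$ has at most $4$ edges then, since $P_{10}$ has girth $5$, it is acyclic, hence bipartite, which gives cases (1) and~(2). If $\Delta_M$ has exactly $5$ edges and is not $2$-colorable, it contains an odd cycle; an odd cycle of $P_{10}$ has length at least $5$, so it has length exactly $5$ and uses all five edges, forcing $\Delta_M=C_5$, which is case~(4). So the crux is the case of a $2$-colorable $\Delta_M$ with exactly $5$ edges.

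Here I would first show $\Delta_M$ has a vertex of degree~$3$. By Theorem~\ref{thm:tree-rank-5} the tropical polynomial $P$ attains its minimum at a triangle; relabelling $[5]$, say at $x_{12}^2x_{34}x_{35}x_{45}$. The Pl\"ucker terms $x_{12}x_{34}$, $x_{12}x_{35}$, $x_{12}x_{45}$ divide this monomial, and replacing any one of them by another term of its Pl\"ucker relation produces a pentagon monomial of $P$ (every index of $[5]$ still occurs exactly twice, none squared); for instance, replacing $x_{12}x_{34}$ by $x_{13}x_{24}$ gives $x_{12}x_{13}x_{24}x_{35}x_{45}$. Minimality of the triangle makes its value at most that of each such pentagon, which simplifies to $M_{12}+M_{34}\le M_{13}+M_{24}$, $M_{12}+M_{34}\le M_{14}+M_{23}$, and the analogous inequalities from the $\{1,2,3,5\}$- and $\{1,2,4,5\}$-relations. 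Since $\Delta_M$ has five edges, every Pl\"ucker relation has a unique strict minimum, so these minima must be $x_{12}x_{34}$, $x_{12}x_{35}$, $x_{12}x_{45}$, whence $12$ is joined to $34$, $35$, $45$ in $\Delta_M$ --- the maximum degree, as $P_{10}$ is $3$-regular. The remaining two edges come from the $\{1,3,4,5\}$- and $\{2,3,4,5\}$-relations and are one of $\{13,45\},\{14,35\},\{15,34\}$ and one of $\{23,45\},\{24,35\},\{25,34\}$ respectively; in every case the new edge attaches a previously unused vertex to one of the three leaves $34,35,45$ of the claw at $12$. If both attach to the same leaf, that leaf also has degree~$3$ and $\Delta_M$ is the ``double star'' with two adjacent degree-$3$ vertices and four pendant edges; if they attach to different leaves, $\Delta_M$ is the spider whose single degree-$3$ vertex has branches of lengths $1,2,2$. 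Up to the residual symmetry $S_{\{1,2\}}\times S_{\{3,4,5\}}$ these are the only two outcomes; they are the graphs of Figure~\ref{fig:Petersen-subgraphs}, both trees, hence $2$-colorable, so the tree rank is $2$, giving case~(3). Explicit $5\times5$ matrices realizing each of these two graphs are easy to write down (choosing generic values compatible with the above inequalities), and $C_5$ is realized by the $0/1$ matrix of the $5$-cycle noted above, so all four possibilities occur.

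The main obstacle is the $2$-colorable $5$-edge case, and inside it the ``exchange'' bookkeeping: verifying that replacing a Pl\"ucker factor of the minimizing triangle always produces a genuine pentagon monomial of $P$, and then upgrading the weak inequalities coming from minimality to the strict ones that are needed, using that all five Pl\"ucker minima are unique. Once that step is secured, the rest is a short finite check on subgraphs of the Petersen graph.
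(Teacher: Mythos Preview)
Your argument is correct, and for the $5$-edge $2$-colorable case it is genuinely different from the paper's. The paper proves an auxiliary structural fact---Lemma~\ref{lem:alternating-cycle}, that $\Delta_M$ admits no alternating even cycle in $P_{10}$---and then runs a case analysis on the maximum degree of~$\Delta_M$: degree~$1$ forces a perfect matching (ruled out via an alternating $8$-cycle), degree~$2$ forces a union of short paths (ruled out via alternating $6$-cycles), leaving only degree~$3$, where the two figures are checked by hand. You bypass the alternating-cycle lemma entirely by invoking the triangle characterization from Theorem~\ref{thm:tree-rank-5}: the minimizing triangle $x_{12}^2x_{34}x_{35}x_{45}$, together with the exchange observation that each Pl\"ucker substitution yields a pentagon monomial of~$P$, directly forces the three edges $12\!-\!34$, $12\!-\!35$, $12\!-\!45$ once uniqueness of the Pl\"ucker minima is used. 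From there your enumeration of the two remaining edges is immediate. Your route is shorter and exploits Theorem~\ref{thm:tree-rank-5} more fully; the paper's route isolates a self-contained combinatorial lemma about~$\Delta_M$ that may be of independent interest but is not needed for the classification once Theorem~\ref{thm:tree-rank-5} is in hand. A minor further difference: for fewer than five edges, the paper deduces tree rank~$\le 2$ from Lemma~\ref{lem:tree-rank-submatrix} (some $4\times 4$ principal submatrix is already a tree matrix), whereas you use the girth-$5$ bipartiteness and Theorem~\ref{thm:tree-rank-5}; both are valid.
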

\begin{proof}
The matrix $M$ is a tree matrix if and only if the four-point condition holds
for all $4$-tuples, i.e.\ if and only if $\Delta_M$ is trivial. This is the
first case.

Now suppose that $\Delta_M$ is a non-trivial graph with at most $4$ edges.
Then, at least one four-point condition holds, so
Lemma~\ref{lem:tree-rank-submatrix} implies that $M$ has tree rank at most~$2$.
However, at least one four-point condition is violated, so $M$ must have tree
rank exactly~$2$.

In the rest of the proof, we assume that $\Delta_M$ has $5$ edges, i.e.\ that
each of the five 4-tuples yields one deficient pair. Thus, $\Delta_M$ contains
exactly one of the three edges $12-34$, $13-24$, and $14-23$,  and
similarly for the remaining 4-tuples. 
\begin{figure}
\begin{center}
\input{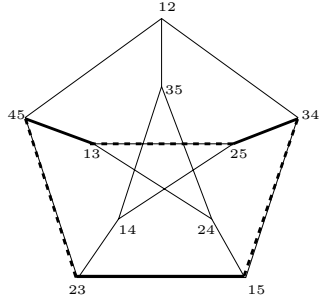}
\caption{An alternating 6-cycle, where the solid edges lie in $H $  and the dotted edges do not.}
\label{fig:alternating-cycle-1}
\end{center}
\end{figure}
Let us say that a subgraph $H $ of  $P_{10} $ \emph{admits an alternating even
cycle} if there exists a cycle of even length  in $ P_{10}$  such that,
traversing the cycle, the edges are alternately members and nonmembers of $ H$
(Figure~\ref{fig:alternating-cycle-1}).
\begin{lemma}\label{lem:alternating-cycle}
The graph $ \Delta_M$ admits no alternating even cycle.
\end{lemma}
\begin{proof}
Note that the only even cycles in the Petersen graph are of lengths 6 and~8
(see, for example, \cite[Ch.~4.2]{bm}).
Suppose  $ C$ is an alternating cycle of length~6. All 6-cycles of $P_{10}$ are
isomorphic, so after relabeling, we may assume that  $ C$ has vertices $ 45,
13,25,34, 15,23$, in that order. Now suppose that $45-13$, $25-34$, and $15-23$
are the three edges of the cycle in $H$. Then
\begin{align*}
M_{45}+ M_{13} &< M_{34}+ M_{15}, \\
M_{25}+ M_{34} &< M_{23}+ M_{45}, \\
M_{15}+ M_{23} &< M_{13}+ M_{25}.
\end{align*}
But adding these inequalities yields a contradiction. If instead $13-
25$, $34-15$, and $23-45$  are the three edges of the cycle lying in $
H$, then we obtain a similar contradiction.

The case of a cycle of length 8 is analogous, so we omit it.
\end{proof}

We now use Lemma~\ref{lem:alternating-cycle} to show that, up to relabeling, the
only two possibilities for $\Delta_M$, assuming that it is $2$-colorable, are
those in Figure~\ref{fig:Petersen-subgraphs}.  We
organize our case analysis according to the maximum degree in the graph
$\Delta_M$.

If $\Delta_M$ has maximum degree~1, so that it is a perfect matching in
$P_{10}$, then one may check that it has an alternating 8-cycle. This is
impossible by Lemma~\ref{lem:alternating-cycle}.
 
If $\Delta_M$ has maximum degree~2, then it is either a
4-path and a 1-path, a 3-path and two 1-paths, or two 2-paths and a 1-path. (By
a $ k $-path we mean a path with $k $~edges). It is easy to check that in each
of these cases, $\Delta_M$ admits an alternating $6 $-cycle, which is again
impossible by Lemma~\ref{lem:alternating-cycle}.
 
Thus, $\Delta_M$ has a vertex of degree 3, which we assume to be the
vertex~$12$. In this case, one may check that, up to relabeling, $\Delta_M$ is
one of the two
graphs shown in Figure~\ref{fig:Petersen-subgraphs}.  
Note that in either case, the graphs are $2$-colorable, and thus $M$ has tree
rank~$2$ by Theorem~\ref{thm:tree-rank-5}.

Finally, if $\Delta_M$ is not $2$-colorable, then it must have an odd cycle. The
Petersen graph has no $3$-cycles, so $\Delta_M$ must be a $5$-cycle.
\end{proof}

\begin{remark}\label{remark_5_cycle}
If $M$ is the $0/1$ matrix corresponding to the $5$-cycle $C_5$, then
$\Delta_M$ is also a $5$-cycle by Theorem~\ref{thm:tree-rank-5-deficiency}.
Explicitly, $\Delta_M$ has an edge for each non-adjacent pair of edges in the
graph $C_5$. Moreover, Theorem~\ref{thm:tree-rank-5-deficiency} tells us that
any other matrix $N$ with tree rank~$3$ must have the same deficiency graph (up
to relabeling). In this sense, $M$ is essentially the only example of a
$5\times 5$ dissimilarity matrix with tree rank~$3$.
\end{remark}

\section{Open questions}\label{sec:open-questions}
\begin{itemize}
\item What is the maximum tree rank of a $n\times n$ dissimilarity matrix?
Theorem~\ref{thm:tree-rank-n-3} gives an upper bound, but beginning with
$n=10$, we do not know if it is sharp.
Specifically, does there exist a $10 \times 10$ dissimilarity matrix with tree
rank $7$? 
\item Give a (reasonable) algorithm for computing tree rank. Note that both the
tropical Veronese and the space of star trees are classical linear spaces, and
so the results in~\cite{de} can be applied to compute
star tree rank and symmetric Barvinok rank. However, it would be nice to have a
good algorithm for computing tree rank.
\item Is it true that the rank of a matrix, according to any of our notions, is
always equal to the chromatic number of the corresponding deficiency graph?
In Sections~\ref{sec:sym-barv-3}, \ref{sec:star-tree-5}, and~\ref{sec:tree-5},
we observed that this was true
for symmetric Barvinok rank with $n \leq 3$, star tree rank with $n \leq 5$, and
tree rank with $n \leq 5$ respectively. In general, we believe the answer is no,
but we do not know of a counterexample.
\item In phylogenetics, trees have all edges, including the pendant
edges, labeled by positive weights, or, equivalently, after negating, by
negative weights. In this way, phylogenetic trees form a subset of the
tropical Grassmannian, and we define the \emph{phylogenetic tree rank} to be the
rank with respect to this subset. One can ask the same questions about
phylogenetic tree rank as in this paper:
Which matrices have finite phylogenetic tree rank? What is the maximum
possible finite phylogenetic tree rank? What is an explicit characterization of
the secant sets for small matrices? Work in this direction was begun
in~\cite{c}.
\item What is an algorithm for computing phylogenetic tree rank?
\item Does a matrix have tree rank~$2$ if and only if all its principal $6
\times 6$ submatrices have tree rank at most~$2$? Pachter and Sturmfels have
made the same conjecture for their definition of tree rank~\cite[p.~ 124]{ps}.
\item What is an explicit description of $6\times 6$ matrices with tree rank at
most~$2$, along the lines of Theorem~\ref{thm:tree-rank-5}? Does this help with
the previous conjecture? There is a necessary condition coming from applying
Theorem~\ref{thm:tree-rank-5} to every $5 \times 5$ minor, and another from the
fact that the matrix must be in the tropicalization of the
classical secant variety.  However, these two conditions
together may not be sufficient for a $6\times 6$ matrix to have tree rank at
most~$2$.
\end{itemize}

\section*{Acknowledgments}
We thank Bernd Sturmfels for his guidance and a close reading of the text. Maria
Ang\'elica Cueto helped us understand the connection to phylogenetics and
William Slofstra pointed us to~\cite{godsil}.
Melody Chan was supported by the Department of Defense through the National
Defense Science and Engineering Graduate Fellowship Program.


\begin{thebibliography}{99}

\bibitem{akian}M.~Akian, S.~Gaubert, A.~Guterman, Linear independence over
tropical semirings and beyond, preprint, \front{0812.3496}, 2008.

\bibitem{bg}R.~Bieri, J.~Groves, The geometry of the set of characters induced
by valuations, J.\ Reine.\ Angew.\ Math. 347:168--195, 1984.

\bibitem{bm}J.~A.~Bondy, V.~S.~R.~Murty, Graph Theory with Applications,
Elsevier, New York, 1982.

\bibitem{cgg}M.~V.~Catalisano, A.~V.~Geramita, A.~Gimigliano, Secant varieties
of Grassmann varieties, Proc.\ of the Amer.\ Math.\ Soc., 133:633--642, 2004.
\front{math.AG/0208166}.

\bibitem{c}M.~A.~Cueto, Tropical mixtures of star tree metrics, preprint,
\front{0907.2053}, 2009.

\bibitem{de}M.~Develin, Tropical secant varieties of linear spaces, Discrete and
Computational Geometry 35:117--129, 2006.
\front{math.CO/0405115}.

\bibitem{dss} M.~Develin, F.~Santos, B.~Sturmfels, On the rank of a tropical matrix, 	in ``Discrete and Computational Geometry'' (E.~Goodman, J.~Pach and E.~Welzl, eds), MSRI Publications, Cambridge University Press, 2005. 
\front{math.CO/0312114}.

\bibitem{d} J.~Draisma, A tropical approach to secant dimensions, Journal of
Pure and Applied Algebra, 212(2):349--363, 2008.  
\front{math.AG/0605345}.

\bibitem{drton} M.~Drton, B.~Sturmfels, S.~Sullivant, Algebraic factor analysis:
tetrads, pentads and beyond, Probability Theory and Related Fields 138(3/4):
463--493, 2007.
\front{0509.5390}.

\bibitem{godsil} C.~D.~Godsil, Algebraic Combinatorics, Chapman and Hall, New
York, 1993.

\bibitem{o} J.~Oxley, Matroid Theory, Oxford Univ.\ Press, New York, 1992.

\bibitem{ps} L.~Pachter, B.~Sturmfels, Algebraic Statistics for Computational Biology, Cambridge University Press, Cambridge, 2005.

\bibitem{r} S.~Radziszowski, Small Ramsey Numbers, Electronic Journal of
Combinatorics, Dynamic survey
\href{http://www.combinatorics.org/Surveys/index.html}{DS1}. updated 2009.

\bibitem{ss} D.~Speyer, B.~Sturmfels, The tropical Grassmannian, Adv.~Geom.,
4(3):389--411, 2004.
\front{math.AG/0304218}.

\end{thebibliography}
\end{document}